\providecommand{\U}[1]{\protect\rule{.1in}{.1in}}
\newtheorem{theorem}{Theorem}
\newtheorem{corollary}[theorem]{Corollary}
\newtheorem{lemma}[theorem]{Lemma}
\newenvironment{proof}[1][Proof]{\noindent\textbf{#1.} }{\ \rule{0.5em}{0.5em}}
\begin{document}

\title{A Spectral Method\\for Nonlinear Elliptic Equations}
\author{Kendall Atkinson\\Departments of Mathematics \& Computer Science \\The University of Iowa
\and David Chien\\Department of Mathematics \\California State University - San Marcos
\and Olaf Hansen\\Department of Mathematics \\California State University - San Marcos}
\maketitle

\begin{abstract}
Let $\Omega$ be an open, simply connected, and bounded region in
$\mathbb{R}^{d}$, $d\geq2$, and assume its boundary $\partial\Omega$ is
smooth. Consider solving an elliptic partial differential equation $Lu=f$ over
$\Omega$ with zero Dirichlet boundary value. The function $f$ is a nonlinear
function of the solution $u$. The problem is converted to an
equivalent\ elliptic problem over the open unit ball $\mathbb{B}^{d}$ in
$\mathbb{R}^{d}$, say $\widetilde{L}\widetilde{u}=\widetilde{f}$. Then a
spectral Galerkin method is used to create a convergent sequence of
multivariate polynomials $\widetilde{u}_{n}$ of degree $\leq n$ that is
convergent to $\widetilde{u}$. The transformation from $\Omega$ to
$\mathbb{B}^{d}$ requires a special analytical calculation for its
implementation. With sufficiently smooth problem parameters, the method is
shown to be rapidly convergent. For $u\in C^{\infty}\left(  \overline{\Omega
}\right)  $ and assuming $\partial\Omega$ is a $C^{\infty}$ boundary, the
convergence of $\left\Vert \widetilde{u}-\widetilde{u}_{n}\right\Vert _{H^{1}%
}$ \ to zero is faster than any power of $1/n$. Numerical examples illustrate
experimentally an exponential rate of convergence. A generalization to
$-\Delta u+\gamma u=f$ with a zero Neumann boundary condition is also presented.

\end{abstract}

\section{Introduction}

Consider the nonlinear problem
\begin{equation}
Lu\left(  s\right)  =f\left(  s,u(s)\right)  ,\quad\quad s\in\Omega\label{en1}%
\end{equation}%
\begin{equation}
u\left(  s\right)  =0,\quad\quad s\in\mathbb{\partial}\Omega\label{en1a}%
\end{equation}
with $L$ an elliptic operator over $\Omega$ and a Dirichlet boundary
condition. Let $\Omega$ be an open, simply--connected, and bounded region in
$\mathbb{R}^{d}$, and assume that its boundary $\partial\Omega$ is smooth and
sufficiently differentiable. Assume $L$ is a strongly elliptic operator of the
form%
\[
Lu(s)\equiv-\sum_{i,j=1}^{d}\frac{\partial}{\partial s_{i}}\left(
a_{i,j}(s)\frac{\partial u(s)}{\partial s_{j}}\right)  +\gamma\left(
s\right)  u\left(  s\right)  ,\quad\quad s\in\Omega,
\]
The functions $a_{i,j}(s)$, $1\leq i,j\leq d$, are assumed to be several times
continuously differentiable over $\overline{\Omega}$, and the $d\times d$
\ matrix $\left[  a_{i,j}\left(  s\right)  \right]  $ is to be symmetric and
to satisfy%
\begin{equation}
\xi^{\text{T}}A(s)\xi\geq\alpha\xi^{\text{T}}\xi,\quad\quad s\in
\overline{\Omega},\quad\xi\in\mathbb{R}^{d}\label{en2}%
\end{equation}
for some $\alpha>-\infty$. We also assume the coefficient $\gamma\in C\left(
\overline{\Omega}\right)  $. Note that because the right-hand function $f$ is
allowed to depend on $u$, we can add to each side of (\ref{en1}) an
arbitrarily large multiple of $u$.

The problem (\ref{en1})-(\ref{en1a}) can be reformulated as a variational
problem. Introduce%
\begin{equation}%
\begin{array}
[c]{r}%
\mathcal{A}\left(  v,w\right)  =%
{\displaystyle\int_{\mathbb{B}^{d}}}
\left[
{\displaystyle\sum\limits_{i,j=1}^{d}}
a_{i,j}(s)\dfrac{\partial v(s)}{\partial s_{i}}\dfrac{\partial w(s)}{\partial
s_{j}}\right]  \,ds\quad\quad\quad\\
+%
{\displaystyle\int_{\mathbb{B}^{d}}}
\gamma\left(  s\right)  v\left(  s\right)  w\left(  s\right)  \,ds\quad\quad
v,w\in H_{0}^{1}\left(  \Omega\right)  .
\end{array}
\label{en2a}%
\end{equation}%
\begin{equation}
\left(  \mathcal{F}\left(  v\right)  \right)  \left(  s\right)  =f\left(
s,v(s)\right)  ,\quad\quad s\in\Omega,\quad\quad v\in H^{1}\left(
\Omega\right)  .\label{en2b}%
\end{equation}
We note that the Sobolev space $H^{m}\left(  \Omega\right)  $ is the closure
of $C^{m}\left(  \overline{\Omega}\right)  $ using the norm%
\[
\left\Vert g\right\Vert _{H^{m}\left(  \Omega\right)  }=\sqrt{\sum_{\left\vert
i\right\vert \leq m}\left\Vert D^{i}g\right\Vert _{L_{2}\left(  \Omega\right)
}^{2}},\quad\quad f\in C^{m}\left(  \overline{\Omega}\right)  ,\quad m\geq1
\]
with $i$ a multi-integer, $i=\left(  i_{1},\dots,i_{d}\right)  ,$ $\left\vert
i\right\vert =i_{1}+\cdots+i_{d}$, and%
\[
D^{i}g\left(  s\right)  =\frac{\partial^{\left\vert i\right\vert }g\left(
s\right)  }{\partial s_{1}^{i_{1}}\cdots\partial s_{d}^{i_{d}}}.
\]
The space $H_{0}^{1}\left(  \Omega\right)  $ is the closure of $C_{0}%
^{1}\left(  \Omega\right)  $ using $\left\Vert \cdot\right\Vert _{H^{1}\left(
\Omega\right)  }$, where elements \ of $C_{0}^{1}\left(  \Omega\right)
\subseteq$ $C^{1}\left(  \overline{\Omega}\right)  $are zero on some open
neighborhood of the boundary of $\Omega$.

Noting (\ref{en2}) and choosing a sufficiently large positive value for%
\[
\min_{s\in\overline{\Omega}}\gamma\left(  s\right)
\]
(say by adding a sufficiently large multiple of $u$ to both sides of
(\ref{en1})), we can assume that $\mathcal{A}$ is a strongly elliptic operator
on $H_{0}^{1}\left(  \Omega\right)  $, namely%
\[
\mathcal{A}\left( v,v\right)  \geq c_{0}\left\Vert v\right\Vert _{H^{1}\left(
\Omega\right)  }^{2} ,\quad\quad\forall v\in H^{1}\left(  \Omega\right)
\]
for some finite $c_{0}>0$.

Reformulate (\ref{en1})-(\ref{en1a}) as the following variational problem:
find $u\in H_{0}^{1}\left(  \Omega\right)  $ for which%
\begin{equation}
\mathcal{A}\left(  u,w\right)  =\left(  \mathcal{F}\left(  u\right)
,w\right)  ,\quad\quad\forall w\in H_{0}^{1}\left(  \Omega\right)
.\label{en3}%
\end{equation}
Throughout this paper we assume the variational reformulation of the problem
(\ref{en1})-(\ref{en1a}) has a locally unique solution $u\in H_{0}^{1}\left(
\Omega\right)  $. For analyses of the existence and uniqueness of a solution
to (\ref{en1})-(\ref{en1a}), see Zeidler \cite{zeidler}.

In the following \S \ref{NumMethod} we define our spectral method for the case
that $\Omega=\mathbb{B}^{d}$; and following that we show how to reformulate
the problem (\ref{en1})-(\ref{en1a}) for a general region $\Omega$ as an
equivalent problem over $\mathbb{B}^{d}$. This follows the earlier development
in \cite{ach2008}. In \S \ref{ConvAnal} we present a convergence analysis for
our numerical method. Implementation of the method is discussed in
\S \ref{implement}, followed by numerical examples in \S \ref{NumExam}. An
extension to a Neumann boundary condition is given in \S \ref{Nmn}.

\section{A spectral method on the unit ball\label{NumMethod}}

Let $\mathcal{X}_{n}$ denote a finite-dimensional subspace of $H_{0}%
^{1}\left(  \mathbb{B}^{d}\right)  $, and let $\left\{  \psi_{1},\dots
,\psi_{N_{n}}\right\}  $ be a basis of $\mathcal{X}_{n}$. Later we define such
a basis by using polynomials of degree $\leq n$ over $\mathbb{R}^{d}$, denoted
by $\Pi_{n}^{d}$, and $N_{n}$ is the dimension of $\Pi_{n}^{d}$. We seek an
approximating solution to (\ref{en3}) by finding $u_{n}\in\mathcal{X}_{n}$
such that%
\begin{equation}
\mathcal{A}\left(  u_{n},w\right)  =\left(  \mathcal{F}\left(  u_{n}\right)
,w\right)  ,\quad\quad\forall w\in\mathcal{X}_{n}.\label{en4}%
\end{equation}
More precisely, find
\begin{equation}
u_{n}\left(  x\right)  =\sum_{\ell=1}^{N_{n}}\alpha_{\ell}\psi_{\ell}\left(
x\right) \label{en5}%
\end{equation}
that satisfies the nonlinear algebraic system%
\begin{equation}%
\begin{array}
[c]{l}%
{\displaystyle\sum\limits_{\ell=1}^{N_{n}}}
\alpha_{\ell}%
{\displaystyle\int_{\mathbb{B}^{d}}}
\left[
{\displaystyle\sum\limits_{i,j=1}^{d}}
a_{i,j}(x)\dfrac{\partial\psi_{\ell}(x)}{\partial x_{i}}\dfrac{\partial
\psi_{k}(x)}{\partial x_{j}}+\gamma\left(  x\right)  \psi_{\ell}\left(
x\right)  \psi_{k}\left(  x\right)  \right]  \,dx\quad\smallskip\\
\quad=%
{\displaystyle\int_{\mathbb{B}^{d}}}
f\left(  x,%
{\displaystyle\sum\limits_{\ell=1}^{N_{n}}}
\alpha_{\ell}\psi_{\ell}\left(  x\right)  \right)  \psi_{k}(x)\,dx,\quad\quad
k=1,\dots,N_{n}.
\end{array}
\label{en6}%
\end{equation}
For notation, we generally use the variable $x$ when considering
$\mathbb{B}^{d}$, and we use the variable $s$ when considering $\Omega$.

To obtain a space for approximating the solution $u$ of our problem, we
proceed as follows. Denote by $\Pi_{n}^{d}$ the space of polynomials in $d$
variables that are of degree $\leq n$: $p\in\Pi_{n}^{d}$ if it has the form%
\[
p(x)=\sum_{\left\vert i\right\vert \leq n}a_{i}x_{1}^{i_{1}}x_{2}^{i_{2}}\dots
x_{d}^{i_{d}}.
\]
Our approximation space with respect to $\mathbb{B}^{d}$ is%
\begin{equation}
\mathcal{X}_{n}=\left\{  \left(  1-\left\vert x\right\vert ^{2}\right)
p(x)\mid p\in\Pi_{n}^{d}\right\}  \subseteq H_{0}^{1}\left(  \mathbb{B}%
^{d}\right) \label{e9a}%
\end{equation}
Let $N_{n}=\dim\mathcal{X}_{n}=\dim\Pi_{n}^{d}$. For $d=2$, $N_{n}=\left(
n+1\right)  \left(  n+2\right)  /2$. Practical implementation of the numerical
method (\ref{en4})-(\ref{en6}) is discussed in \S \ref{implement}.

\subsection{Transformation of the domain $\Omega$ \label{section_transform}}

For the more general problem (\ref{en1})-(\ref{en1a}) over a general region
$\Omega$, we reformulate it as a problem over $\mathbb{B}^{d}$. We review here
some \ ideas from \cite{ach2008}, referring the reader to it for additional details.

Assume the existence of a function%
\begin{equation}
\Phi:\overline{\mathbb{B}}^{d}\underset{onto}{\overset{1-1}{\longrightarrow}%
}\overline{\Omega}\label{e4}%
\end{equation}
with $\Phi$ a twice--differentiable mapping, and let $\Psi=\Phi^{-1}%
:\overline{\Omega}\underset{onto}{\overset{1-1}{\longrightarrow}}%
\overline{\mathbb{B}}^{d}$. \ For $v\in L^{2}\left(  \Omega\right)  $, let%
\begin{equation}
\widetilde{v}(x)=v\left(  \Phi\left(  x\right)  \right)  ,\quad\quad
x\in\overline{\mathbb{B}}^{d}\subseteq\mathbb{R}^{d}\label{e6}%
\end{equation}
and conversely,%
\begin{equation}
v(s)=\widetilde{v}\left(  \Psi\left(  s\right)  \right)  ,\quad\quad
s\in\overline{\Omega}\subseteq\mathbb{R}^{d}.\label{e7}%
\end{equation}
Assuming $v\in H^{1}\left(  \Omega\right)  $, we can show%
\[
\nabla_{x}\widetilde{v}\left(  x\right)  =J\left(  x\right)  ^{\text{T}}%
\nabla_{s}v\left(  s\right)  ,\quad\quad s=\Phi\left(  x\right)
\]
with $J\left(  x\right)  $ the Jacobian matrix for $\Phi$ over the unit ball
$\mathbb{B}^{d}$,%
\begin{equation}
J(x)\equiv\left(  D\Phi\right)  (x)=\left[  \frac{\partial\Phi_{i}%
(x)}{\partial x_{j}}\right]  _{i,j=1}^{d},\quad\quad x\in\overline{\mathbb{B}%
}^{d}.\label{e7b}%
\end{equation}
To use our method for problems over a region $\Omega$, it is necessary to know
explicitly the functions $\Phi$ and $J$. The creation of such a mapping $\Phi$
is taken up in \cite{ah2011} for cases in which only a boundary mapping is
known, from $\mathbb{S}^{d-1}$ to $\partial\Omega$, a common way to define the
region $\Omega$.

We assume%
\begin{equation}
\det J(x)\neq0,\quad\quad x\in\overline{\mathbb{B}}^{d}.\label{e8}%
\end{equation}
Similarly,%
\[
\nabla_{s}v(s)=K(s)^{\text{T}}\nabla_{x}\widetilde{v}(x),\quad\quad x=\Psi(s)
\]
with $K(s)$ the Jacobian matrix for $\Psi$ over $\Omega$. By differentiating
the identity
\[
\Psi\left(  \Phi\left(  x\right)  \right)  =x,\quad\quad x\in\overline
{\mathbb{B}}^{d}%
\]
we obtain%
\[
K\left(  \Phi\left(  x\right)  \right)  =J\left(  x\right)  ^{-1}.
\]
Assumptions about the differentiability of $\widetilde{v}\left(  x\right)  $
can be related back to assumptions on the differentiability of $v(s)$ and
$\Phi(x)$.

\begin{lemma}
\label{transform_lemma}Let $\Phi\in C^{m}\left(  \overline{\mathbb{B}}%
^{d}\right)  $. If $v\in C^{k}\left(  \overline{\Omega}\right)  $, then
$\widetilde{v}\in C^{q}\left(  \overline{\mathbb{B}}^{d}\right)  $ with
$q=\min\left\{  k,m\right\}  $. Similarly, if $v\in H^{k}\left(
\overline{\Omega}\right)  $, then $\widetilde{v}\in H^{q}\left(
\overline{\mathbb{B}}^{d}\right)  .$
\end{lemma}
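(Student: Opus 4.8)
The plan is to treat the two assertions separately, with the classical ($C^k$) statement serving as the engine for the Sobolev ($H^k$) one. For the first claim I would observe that $\widetilde{v} = v \circ \Phi$ and apply the multivariate chain rule (Fa\`a di Bruno's formula). For any multi-index $\alpha$ with $|\alpha| \leq q = \min\{k,m\}$, the derivative $D^\alpha \widetilde{v}(x)$ can be written as a finite sum of terms, each a product of a single factor $(D^\beta v)(\Phi(x))$ with $|\beta| \leq |\alpha|$ and finitely many factors that are partial derivatives of the components $\Phi_1,\dots,\Phi_d$ of total order $\leq |\alpha|$. Since $|\alpha| \leq q \leq k$, the factor $D^\beta v$ is continuous on $\overline{\Omega}$, and composed with the continuous map $\Phi$ it is continuous on $\overline{\mathbb{B}}^d$; since $|\alpha| \leq q \leq m$ and $\Phi \in C^m(\overline{\mathbb{B}}^d)$, the $\Phi$-derivative factors are continuous as well. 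Hence each $D^\alpha \widetilde{v}$ is continuous for $|\alpha| \leq q$, which gives $\widetilde{v} \in C^q(\overline{\mathbb{B}}^d)$.

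For the Sobolev claim I would argue by density together with an a priori bound. By the paper's own definition, $C^k(\overline{\Omega})$ is dense in $H^k(\Omega)$ in the $H^k$ norm, so it suffices to prove the estimate $\|\widetilde{v}\|_{H^q(\mathbb{B}^d)} \leq C \|v\|_{H^k(\Omega)}$ for all $v \in C^k(\overline{\Omega})$, with a constant $C$ depending only on $\Phi$, $d$, and $q$. For such $v$ the chain-rule expansion above holds pointwise; I would take the $L^2(\mathbb{B}^d)$ norm of $D^\alpha \widetilde{v}$, $|\alpha| \leq q$, and bound each product. The derivatives of $\Phi$ appearing as factors are continuous on the compact set $\overline{\mathbb{B}}^d$, hence uniformly bounded by a constant $M = M(\Phi)$. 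The remaining factor is controlled by the change of variables $s = \Phi(x)$: since $\det J(x) \neq 0$ on $\overline{\mathbb{B}}^d$ by (\ref{e8}) and $\det J$ is continuous on a compact set, $|\det J|$ has a positive lower bound, so the inverse Jacobian factor $|\det K(s)| = |\det J(\Psi(s))|^{-1}$ is uniformly bounded; consequently $\int_{\mathbb{B}^d} |(D^\beta v)(\Phi(x))|^2 \, dx = \int_\Omega |D^\beta v(s)|^2 |\det K(s)| \, ds \leq C' \|v\|_{H^k(\Omega)}^2$ whenever $|\beta| \leq q \leq k$. Summing the finitely many products over all $|\alpha| \leq q$ yields the desired bound.

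To finish, given an arbitrary $v \in H^k(\Omega)$ I would choose $v_j \in C^k(\overline{\Omega})$ with $v_j \to v$ in $H^k(\Omega)$; the a priori estimate shows $\{\widetilde{v_j}\}$ is Cauchy in $H^q(\mathbb{B}^d)$, hence convergent to some $w \in H^q(\mathbb{B}^d)$, while the $L^2$ change-of-variables identity gives $\widetilde{v_j} \to \widetilde{v}$ in $L^2(\mathbb{B}^d)$, so $w = \widetilde{v}$ and therefore $\widetilde{v} \in H^q(\mathbb{B}^d)$. I expect the main obstacle to be purely the bookkeeping: organizing the Fa\`a di Bruno expansion so that the orders of the $v$- and $\Phi$-derivatives are simultaneously controlled by $|\alpha|$, and verifying the uniform boundedness of the Jacobian factors. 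The one structural hypothesis that must be invoked is the nonvanishing of $\det J$ in (\ref{e8}), which, together with the compactness of $\overline{\mathbb{B}}^d$, both legitimizes the change of variables and yields the uniform bounds on $|\det K|$.
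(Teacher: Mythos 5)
Your argument is correct and is exactly the ``straightforward'' chain-rule/change-of-variables argument the paper alludes to: the paper offers no written proof beyond the remark that the result follows from the definition $\widetilde{v}=v\circ\Phi$ in (\ref{e6}). Your write-up simply supplies the details (Fa\`a di Bruno for the $C^k$ case, density plus the a priori bound using the nonvanishing of $\det J$ for the $H^k$ case), and it is sound.
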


\noindent A proof is straightforward using (\ref{e6}). A converse statement
can be made as regards $\widetilde{v}$, $v$, and $\Psi$ in (\ref{e7}).
Moreover, the differentiability of $\Phi$ over $\mathbb{B}^{d}$ is exactly the
same as that of $\Psi$ over $\Omega$.

Applying this transformation to the equation (\ref{en1}), we obtain%
\begin{equation}%
\begin{array}
[c]{r}%
-%
{\displaystyle\sum\limits_{i,j=1}^{d}}
\dfrac{\partial}{\partial x_{i}}\left(  \det\left(  J(x)\right)
\widetilde{a}_{i,j}(x)\dfrac{\partial\widetilde{u}(x)}{\partial x_{j}}\right)
+\widetilde{\gamma}\left(  x\right)  \widetilde{u}(x)\smallskip\\
=\widetilde{f}\left(  x,\widetilde{u}(x)\right)  ,\quad\quad x\in
\mathbb{B}^{d}%
\end{array}
\label{en8a}%
\end{equation}%
\begin{align}
\widetilde{f}\left(  x,\widetilde{u}(x)\right)   &  =\det\left(  J(x)\right)
f\left(  \Phi\left(  x\right)  ,\widetilde{u}(x)\right)  ,\quad\quad
x\in\mathbb{B}^{d}\label{en8aa}\\
\widetilde{\gamma}\left(  x\right)   &  =\det\left(  J(x)\right)
\gamma\left(  \Phi\left(  x\right)  \right) \label{en8ab}%
\end{align}%
\begin{align}
\widetilde{A}\left(  x\right)   &  =J\left(  x\right)  ^{-1}A(\Phi\left(
x\right)  )J\left(  x\right)  ^{-\text{T}}\nonumber\\
&  \equiv\left[  \widetilde{a}_{i,j}(x)\right]  _{i,j=1}^{d}\label{en8ac}%
\end{align}
A derivation of this is given in \cite[Thm. 3]{ach2008}. With (\ref{en8a}), we
also impose the Dirichlet condition
\begin{equation}
\widetilde{u}(x)=0,\quad\quad x\in\mathbb{B}^{d}\label{en8b}%
\end{equation}

The problem of solving (\ref{en8a}), (\ref{en8b}) is completely equivalent to
that of solving (\ref{en1}), (\ref{en1a}). Also, the differential operator in
(\ref{en8a}) will be strongly elliptic. As noted earlier, the creation of such
a mapping $\Phi$ is discussed at length in \cite{ah2011} for extending a
boundary mapping $\varphi:\mathbb{S}^{d-1}\rightarrow\partial\Omega$ to a
mapping $\Phi$ satisfying (\ref{e4}) and (\ref{e8}).

\section{Error analysis\label{ConvAnal}}

\textbf{\ }In \cite{Osborn1975} Osborn converted a finite element method for
solving an eigenvalue problem for an elliptic partial differential equation to
a corresponding numerical method for approximating the eigenvalues of a
compact integral operator. He then used results for the latter to obtain
convergence results for his finite element method. We use his construction to
convert the numerical method for (\ref{en3}) to a corresponding method for
finding a fixed point for a completely continuous nonlinear integral operator,
and this latter numerical method will be analyzed using the results given in
\cite[Chap. 3]{Kras1964} and \cite{atkin1973}.

Important results about polynomial approximation \ have been given recently by
Li and Xu \cite{LiXu}, and they are critical to our convergence analysis.

\begin{theorem}
\label{Thm1a}(Li and Xu) Let $r\geq2$. \textit{Given }$v\in H^{r}\left(
\mathbb{B}^{d}\right)  $\textit{, there exists a sequence of polynomials
}$p_{n}\in\Pi_{n}^{d}$ such that%
\begin{equation}
\left\Vert v-p_{n}\right\Vert _{H^{1}\left(  \mathbb{B}^{d}\right)  }%
\leq\varepsilon_{n,r}\left\Vert v\right\Vert _{H^{r}\left(  \mathbb{B}%
^{d}\right)  },\quad\quad n\geq1.\label{en9c}%
\end{equation}
\textit{The sequence }$\varepsilon_{n,r}=\mathcal{O}\left(  n^{-r+1}\right)
$\textit{\ and is independent of }$v$\textit{.\medskip}
\end{theorem}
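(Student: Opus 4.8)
The plan is to reduce the multivariate approximation problem on $\mathbb{B}^d$ to a family of one-dimensional weighted approximation problems by expanding in spherical harmonics, and then to reassemble the pieces using orthogonality of the angular modes. Writing $x = r\xi$ with $r = |x| \in [0,1]$ and $\xi \in \mathbb{S}^{d-1}$, I would expand $v(r\xi) = \sum_{k \geq 0}\sum_{\ell} v_{k,\ell}(r)\,Y_k^\ell(\xi)$, where $\{Y_k^\ell\}$ is an orthonormal basis of spherical harmonics of degree $k$. The regularity $v \in H^r(\mathbb{B}^d)$ transfers, through this expansion, into weighted Sobolev regularity of the radial coefficients $v_{k,\ell}$ together with summable decay in the angular index $k$; each $v_{k,\ell}$ inherits the behavior $v_{k,\ell}(r)\sim r^k$ near $r=0$ forced by smoothness of $v$ at the center of the ball.

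First I would treat each radial factor with a one-dimensional Jackson-type theorem. For fixed $(k,\ell)$ the natural approximants are $r^k$ times a polynomial in $r^2$, i.e. Jacobi/Gegenbauer-type polynomials adapted to the measure $r^{d-1}\,dr$ on $[0,1]$ and to the angular degree $k$; the products $r^k(\text{poly in }r^2)\,Y_k^\ell(\xi)$ are exactly the orthogonal polynomials on $\mathbb{B}^d$ of total degree $\leq n$. The classical weighted one-dimensional estimate then supplies, for each mode, an approximant $p_{n,k,\ell}$ whose error in the relevant radial $H^1$-weighted norm is bounded by $n^{-(r-1)}$ times the corresponding $H^r$-weighted radial norm, with a constant that must be made uniform in $(k,\ell)$ by absorbing the angular degree into the eigenvalue of the underlying Sturm--Liouville operator.

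A cleaner packaging is operator-theoretic. On $\mathbb{B}^d$ there is a second-order elliptic operator $\mathcal{L}$ (the ball analogue of the Jacobi operator) whose eigenfunctions are precisely the orthogonal polynomials $\varphi_m$, with eigenvalues $\lambda_m \asymp m^2$. Writing $\widehat v_m$ for the orthogonal-polynomial coefficients of $v$ and assuming the norm equivalence $\|v\|_{H^r(\mathbb{B}^d)}^2 \asymp \sum_m (1+\lambda_m)^{r}\,|\widehat v_m|^2$, the estimate is immediate: taking $p_n = S_n v$, the orthogonal projection onto $\Pi_n^d$, one has
\[
\|v - S_n v\|_{H^1(\mathbb{B}^d)}^2 \lesssim \sum_{\deg \varphi_m > n}(1+\lambda_m)\,|\widehat v_m|^2 \leq (1+\lambda_n)^{-(r-1)}\sum_m (1+\lambda_m)^{r}\,|\widehat v_m|^2 \lesssim n^{-2(r-1)}\,\|v\|_{H^r(\mathbb{B}^d)}^2 ,
\]
where $\lambda_n \asymp n^2$ is the least eigenvalue among the excluded modes. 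Taking square roots yields $\varepsilon_{n,r} = \mathcal{O}(n^{-r+1})$ with a constant depending only on $r$ and $d$, hence independent of $v$.

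The hard part will be the norm equivalence between the intrinsic Sobolev norm $\|\cdot\|_{H^r(\mathbb{B}^d)}$, defined with ordinary Lebesgue measure and no boundary weight, and the spectral norm generated by $\mathcal{L}$, whose natural $L^2$ structure carries the degenerate boundary weight $(1-|x|^2)^{\mu}$. Bridging the two requires elliptic regularity up to the boundary for the degenerate operator $\mathcal{L}$, together with careful tracking near both $r=1$ and $r=0$ of the radial weights and of the coupling factor $k/r$ arising in the angular part of the gradient, which is the source of any loss in the rate. Controlling this coupling uniformly in $k$, so that the one-dimensional constants do not degrade as the angular frequency grows, is the technical core of the argument and where I would expect to spend most of the effort.
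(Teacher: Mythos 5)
The paper does not prove this statement at all: it is quoted verbatim from Li and Xu, with the remark immediately after Theorem \ref{Thm1b} that these are Theorems 4.2 and 4.3 of \cite{LiXu}. So the only fair comparison is between your sketch and the actual argument in \cite{LiXu}, whose general setting (orthogonal polynomial expansions on $\mathbb{B}^{d}$, spherical--polar coordinates, the second-order operator whose eigenfunctions are the $\mathcal{V}_{m}$ spaces with eigenvalues $\asymp m^{2}$) your proposal does correctly identify.

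The genuine gap is in your ``clean packaging.'' The norm equivalence $\left\Vert v\right\Vert _{H^{r}\left(  \mathbb{B}^{d}\right)  }^{2}\asymp\sum_{m}\left(  1+\lambda_{m}\right)  ^{r}\left\vert \widehat{v}_{m}\right\vert ^{2}$ on which the whole three-line estimate rests is not merely hard to prove --- it is false. The operator $\mathcal{L}$ whose eigenfunctions are the orthogonal polynomials for Lebesgue measure on the ball degenerates at $\partial\mathbb{B}^{d}$: its Dirichlet form is $\int_{\mathbb{B}^{d}}\left(  \left\vert \nabla u\right\vert ^{2}-\left\vert x\cdot\nabla u\right\vert ^{2}\right)  dx$, which carries the weight $1-\left\vert x\right\vert ^{2}$ on the radial derivative. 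Consequently the spectral space of order one strictly contains $H^{1}\left(  \mathbb{B}^{d}\right)  $, the two norms are inequivalent near the boundary, and your displayed chain of inequalities bounds only a weighted seminorm of $v-S_{n}v$, not the full $H^{1}$ norm the theorem asserts. That mismatch between the weighted spectral scale and the unweighted Sobolev scale is precisely the content of Li and Xu's theorem; they resolve it not by a norm equivalence but by explicit commutation identities between the partial derivatives $\partial_{i}$ (decomposed into angular derivatives $x_{i}\partial_{j}-x_{j}\partial_{i}$ and a radial part) and the projections onto the spaces $\mathcal{V}_{m}$, followed by an induction on $r$. Your sketch flags this as ``where I would expect to spend most of the effort,'' but as written it postulates a false statement and defers the actual proof; the mode-by-mode Jackson argument in your first paragraph suffers from the same issue, since the uniformity in the angular index $k$ is again controlled only in the weighted radial norms natural to the Gegenbauer weight, not in the unweighted $H^{1}$ norm on the ball.
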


\begin{theorem}
\label{Thm1b}(Li and Xu) Let $r\geq2$. \textit{Given }$v\in H_{0}^{1}\left(
\mathbb{B}^{d}\right)  \cap H^{r}\left(  \mathbb{B}^{d}\right)  $\textit{,
there exists a sequence of polynomials }$p_{n}\in\mathcal{X}_{n}%
$\textit{\ such that}%
\begin{equation}
\left\Vert v-p_{n}\right\Vert _{H^{1}\left(  \mathbb{B}^{d}\right)  }%
\leq\varepsilon_{n,r}\left\Vert v\right\Vert _{H^{r}\left(  \mathbb{B}%
^{d}\right)  },\quad\quad n\geq1.\label{en9d}%
\end{equation}
\textit{The sequence }$\varepsilon_{n,r}=\mathcal{O}\left(  n^{-r+1}\right)
$\textit{\ and is independent of }$v$\textit{.\medskip}
\end{theorem}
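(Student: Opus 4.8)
The plan is to use Theorem \ref{Thm1a} to produce an unconstrained polynomial approximation of $v$ and then to correct its boundary values so that the corrected polynomial lands in $\mathcal{X}_n$. The tempting shortcut is to write $v=(1-|x|^2)w$ with $w=v/(1-|x|^2)$, approximate $w$ by some $p_n\in\Pi_n^d$, and take $(1-|x|^2)p_n\in\mathcal{X}_n$; but since $v\in H_0^1$ vanishes only to first order at $\partial\mathbb{B}^d$, division by $1-|x|^2$ costs one Sobolev derivative (so $w\in H^{r-1}$ at best), and Theorem \ref{Thm1a} would then yield only $\mathcal{O}(n^{-r+2})$. To obtain the sharp rate $\mathcal{O}(n^{-r+1})$ I would instead keep $v$ intact and remove only the boundary trace of its polynomial approximant.

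First I would invoke Theorem \ref{Thm1a} to get $q_n\in\Pi_n^d$ with $\|v-q_n\|_{H^1(\mathbb{B}^d)}\le\varepsilon_{n,r}\|v\|_{H^r(\mathbb{B}^d)}$. Because $v\in H_0^1(\mathbb{B}^d)$ has zero trace on $\mathbb{S}^{d-1}$, the trace of $q_n$ equals the trace of $q_n-v$, so the trace theorem gives $\|q_n\|_{H^{1/2}(\mathbb{S}^{d-1})}\le C\|q_n-v\|_{H^1(\mathbb{B}^d)}=\mathcal{O}(n^{-r+1})$. Next I would lift this boundary datum back into the ball by its harmonic extension: expanding $q_n|_{\mathbb{S}^{d-1}}=\sum_{k\le n}\sum_m\beta_{k,m}Y_{k,m}$ in spherical harmonics, set $c_n(x)=\sum_{k\le n}\sum_m\beta_{k,m}|x|^kY_{k,m}(x/|x|)$, each summand being a homogeneous harmonic polynomial of degree $k$, so that $c_n\in\Pi_n^d$. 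A direct computation with the orthonormal $Y_{k,m}$ (using Green's identity for the gradient term) shows $\|c_n\|_{H^1(\mathbb{B}^d)}^2\approx\sum_{k,m}(1+k)|\beta_{k,m}|^2\approx\|q_n\|_{H^{1/2}(\mathbb{S}^{d-1})}^2$, whence $\|c_n\|_{H^1(\mathbb{B}^d)}=\mathcal{O}(n^{-r+1})$.

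Finally, $q_n-c_n$ is a polynomial of degree $\le n$ whose trace on $\mathbb{S}^{d-1}$ vanishes. Since $1-|x|^2$ is irreducible for $d\ge2$ and the real sphere is Zariski dense in its complex zero set, any polynomial vanishing on $\{|x|=1\}$ is divisible by $1-|x|^2$; hence $q_n-c_n=(1-|x|^2)Q$ with $Q\in\Pi_{n-2}^d\subseteq\Pi_n^d$, i.e. $q_n-c_n\in\mathcal{X}_n$. Taking $p_n=q_n-c_n$ and using the triangle inequality, $\|v-p_n\|_{H^1}\le\|v-q_n\|_{H^1}+\|c_n\|_{H^1}=\mathcal{O}(n^{-r+1})$, the claimed bound after absorbing constants into $\varepsilon_{n,r}$.

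The main obstacle is the middle step: producing a correction that is simultaneously (i) a polynomial of degree $\le n$, so that the divisibility argument keeps $q_n-c_n$ inside $\mathcal{X}_n$, and (ii) controlled in $H^1$ by the weak $H^{1/2}$ norm of the boundary data, so that no power of $n$ is lost. The harmonic (spherical-harmonic) extension is what reconciles these two demands, since it is both polynomial of the correct degree and norm-minimizing with $\|c_n\|_{H^1}\lesssim\|q_n\|_{H^{1/2}}$. An alternative, and presumably the route taken by Li and Xu, is to bypass the correction entirely by expanding $v$ directly in an $H^1$-orthogonal polynomial basis adapted to the weight $1-|x|^2$ and truncating, which yields the same rate.
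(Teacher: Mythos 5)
Your proposal is correct, but it is worth noting at the outset that the paper itself offers no proof of this statement: Theorem \ref{Thm1b} is quoted verbatim from Li and Xu, with the remark that it is Theorem 4.3 of \cite{LiXu} (together with the comments following it), where it is obtained from the machinery of orthogonal polynomial expansions on the ball adapted to the weight $1-\left\vert x\right\vert ^{2}$ --- essentially the ``alternative route'' you guess at in your closing sentence. What you supply instead is a self-contained deduction of Theorem \ref{Thm1b} from Theorem \ref{Thm1a}, and the chain of steps holds up: the trace theorem converts the zero trace of $v$ into the bound $\left\Vert q_{n}\right\Vert _{H^{1/2}\left(  \mathbb{S}^{d-1}\right)  }\leq C\left\Vert q_{n}-v\right\Vert _{H^{1}}$; the restriction of $q_{n}$ to the sphere does lie in $\bigoplus_{k\leq n}\mathbb{H}_{k}$, so its harmonic extension $c_{n}$ is a polynomial of degree $\leq n$; the norm equivalence $\left\Vert c_{n}\right\Vert _{H^{1}}^{2}\approx\sum_{k,m}(1+k)\left\vert \beta_{k,m}\right\vert ^{2}$ via Green's identity is the standard $H^{1/2}\rightarrow H^{1}$ bound for the harmonic lifting; and the divisibility of a polynomial vanishing on $\left\{  \left\vert x\right\vert =1\right\}  $ by $1-\left\vert x\right\vert ^{2}$ (irreducible for $d\geq2$) places $q_{n}-c_{n}$ in $\mathcal{X}_{n-2}\subseteq\mathcal{X}_{n}$. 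Your opening caveat about the naive factorization $v=(1-\left\vert x\right\vert ^{2})w$ losing a derivative is also well taken and explains why some such boundary-correction (or a weighted orthogonal expansion, as in Li--Xu) is needed to reach the sharp rate. What your route buys is a transparent reduction of the constrained approximation result to the unconstrained one using only classical tools (trace theorem, spherical harmonics, polynomial algebra); what the Li--Xu route buys is sharper and more general estimates (including weighted norms and the improved rates the authors conjecture after Corollary \ref{CorDir}) that do not follow from such a soft argument.
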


\noindent These two results are Theorems 4.2 and 4.3, respectively, in
\cite{LiXu}. For the second theorem, also see the comments immediately
following \cite[Thm. 4.3]{LiXu}.\medskip

For the convergence analysis, we follow closely the development in Osborn
\cite[\S 4(a)]{Osborn1975}. We omit the details, noting only those different
from \cite[\S 4(a)]{Osborn1975}. Taking $f$ to be a given function in
$L_{2}\left(  \mathbb{B}^{d}\right)  $, the solution of (\ref{en3}) can be
written as $u=\mathcal{T}f$ with $\mathcal{T}:L_{2}\left(  \mathbb{B}%
^{d}\right)  \rightarrow H_{0}^{1}\left(  \mathbb{B}^{d}\right)  \cap
H^{2}\left(  \mathbb{B}^{d}\right)  $ and bounded,%
\[
\left\Vert \mathcal{T}f\right\Vert _{H^{2}\left(  \mathbb{B}^{d}\right)  }\leq
C\,\left\Vert f\right\Vert _{L_{2}\left(  \mathbb{B}^{d}\right)  },\quad\quad
f\in L_{2}\left(  \mathbb{B}^{d}\right)  .
\]
The operator is the `Green's integral operator' for the associated Dirichlet
problem. More generally, for $r\geq0$, $\mathcal{T}:H^{r}\left(
\mathbb{B}^{d}\right)  \rightarrow H_{0}^{1}\left(  \mathbb{B}^{d}\right)
\cap H^{r+2}\left(  \mathbb{B}^{d}\right)  $,
\[
\left\Vert \mathcal{T}f\right\Vert _{H^{r+2}\left(  \mathbb{B}^{d}\right)
}\leq C_{r}\,\left\Vert f\right\Vert _{H^{r}\left(  \mathbb{B}^{d}\right)
},\quad\quad f\in H^{r}\left(  \mathbb{B}^{d}\right)  .
\]
In addition, $\mathcal{T}$ is a compact operator on $L_{2}\left(
\mathbb{B}^{d}\right)  $ into $H_{0}^{1}\left(  \mathbb{B}^{d}\right)  $, and
more generally, it is compact from $H^{r}\left(  \mathbb{B}^{d}\right)  $ into
$H_{0}^{1}\left(  \mathbb{B}^{d}\right)  \cap H^{r+1}\left(  \mathbb{B}%
^{d}\right)  $. With our assumptions, $\mathcal{T}$ is self-adjoint on
$L_{2}\left(  \mathbb{B}^{d}\right)  $, although Osborn allows more general
non-symmetric operators $L$. The same argument is applied to the numerical
method \ (\ref{en4}) to obtain a solution $u_{n}=\mathcal{T}_{n}f$ with
$\mathcal{T}_{n}$ having properties similar to $\mathcal{T}$ \ and also having
finite rank with range in $\mathcal{X}_{n}$.

The major assumption of Osborn is that his finite element method satisfies an
approximation inequality (see \cite[(4.7)]{Osborn1975}), and the above
theorems of Li and Xu are the corresponding statements for our numerical
method. The argument in \cite[\S 4(a)]{Osborn1975} then shows
\begin{equation}
\left\Vert \mathcal{T}-\mathcal{T}_{n}\right\Vert _{L_{2}\rightarrow L_{2}%
}\leq\frac{c}{n^{2}}.\label{en9}%
\end{equation}

Our variational problems (\ref{en3}) and (\ref{en4}) can now be reformulated
as
\begin{align}
u  &  =\mathcal{TF}\left(  u\right)  ,\label{en10}\\
u_{n}  &  =\mathcal{T}_{n}\mathcal{F}\left(  u_{n}\right)  ,\label{en11}%
\end{align}
and we regard these as equations on some subset of $L_{2}\left(
\mathbb{B}^{d}\right)  $, dependent on the form of the function $f$ defining
$\mathcal{F}$. The operator $\mathcal{F}$ of (\ref{en2b}) is sometimes called
the Nemytskii operator; see \cite[Chap. 1, \S 2]{Kras1964} for its properties.
It is necessary to assume that $\mathcal{F}$ is defined and continuous over
some open subset $D\subseteq L_{2}\left(  \mathbb{B}^{d}\right)  $:%
\[%
\begin{array}
[c]{c}%
v\in D\implies f\left(  \cdot,v\right)  \in L_{2}\left(  \mathbb{B}%
^{d}\right)  ,\\
v_{n}\rightarrow v\text{ in }L_{2}\left(  \mathbb{B}^{d}\right)  \implies
f\left(  \cdot,v_{n}\right)  \rightarrow f\left(  \cdot,v\right)  \text{ in
}L_{2}\left(  \mathbb{B}^{d}\right)  .
\end{array}
\]

The operators $\mathcal{T}$ and $\mathcal{T}_{n}$ are linear, and the
Nemytskii operator $\mathcal{F}$ provides the nonlinearity. The reformulation
(\ref{en10})-(\ref{en11}) can be used to give an error analysis of the
spectral method (\ref{en4}). The mapping $\mathcal{TF}$ is a compact nonlinear
operator on an open domain $D$ of a Banach space $\mathcal{X}$, in this case
$L_{2}\left(  \mathbb{B}^{d}\right)  $. Let $V\subseteq D$ be an open set
containing an isolated fixed point solution $u^{\ast}$ of (\ref{en10}). We can
define the index of $u^{\ast}$ (or more properly, the rotation of the vector
field $v-\mathcal{TF}\left(  v\right)  $ as $v$ varies over the boundary of
$V$); see \cite[Part II]{Kras1964}. For some intuition as to stability
implications of a fixed point having a nonzero index, see \cite[Property P5,
p. 802]{atkin1973}.

\begin{theorem}
\label{thm2}Assume the problem (\ref{en3}) has a solution $u^{\ast}$ that is
unique within some open neighborhood $V$ of $u^{\ast}$; further assume that
$u^{\ast}$ has nonzero index. Then for all sufficiently large $n$, (\ref{en4})
has one or more solutions $u_{n}$ within $V$, and all such $u_{n}$ converge to
$u^{\ast}$.
\end{theorem}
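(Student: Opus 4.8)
The plan is to recast the conclusion as an instance of the abstract approximation theory for fixed points of completely continuous operators developed in \cite[Chap. 3]{Kras1964} and \cite{atkin1973}, applied to the equivalent fixed-point formulations (\ref{en10})--(\ref{en11}). Recall that a solution of (\ref{en4}) is exactly a solution of $u_{n}=\mathcal{T}_{n}\mathcal{F}(u_{n})$. First I would shrink the neighborhood: since $u^{\ast}$ is isolated, replace $V$ by a closed ball $\overline{V}\subseteq D$ centered at $u^{\ast}$ small enough that $u^{\ast}$ is the \emph{only} solution of $u=\mathcal{TF}(u)$ in $\overline{V}$. Because $\overline{V}$ is bounded and $\mathcal{F}$ is a continuous Nemytskii operator, $M:=\sup_{v\in\overline{V}}\left\Vert \mathcal{F}(v)\right\Vert _{L_{2}}<\infty$.

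The key quantitative ingredient is uniform convergence of $\mathcal{T}_{n}\mathcal{F}$ to $\mathcal{TF}$ on $\overline{V}$. Since $\mathcal{TF}(v)-\mathcal{T}_{n}\mathcal{F}(v)=\left(\mathcal{T}-\mathcal{T}_{n}\right)\mathcal{F}(v)$, the operator-norm bound (\ref{en9}) yields
\[
\sup_{v\in\overline{V}}\left\Vert \mathcal{TF}(v)-\mathcal{T}_{n}\mathcal{F}(v)\right\Vert _{L_{2}}\leq\left\Vert \mathcal{T}-\mathcal{T}_{n}\right\Vert _{L_{2}\rightarrow L_{2}}\,M\leq\frac{cM}{n^{2}}\longrightarrow0.
\]
Next I would set up the rotation (topological degree) argument. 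Because $\mathcal{TF}$ is completely continuous and $u=\mathcal{TF}(u)$ has no solution on the closed bounded set $\partial V$, a standard compactness argument gives $\delta:=\inf_{v\in\partial V}\left\Vert v-\mathcal{TF}(v)\right\Vert _{L_{2}}>0$; this is where complete continuity is essential, since the infimum could vanish for a general continuous map. The rotation of the field $v-\mathcal{TF}(v)$ over $\partial V$ equals the index of $u^{\ast}$, nonzero by hypothesis. For $n$ large enough that $cM/n^{2}<\delta$, the linear homotopy $v-[(1-t)\mathcal{TF}(v)+t\,\mathcal{T}_{n}\mathcal{F}(v)]$ is nonvanishing on $\partial V$, so by homotopy invariance the rotation of $v-\mathcal{T}_{n}\mathcal{F}(v)$ over $\partial V$ is likewise nonzero, forcing a zero inside, i.e. a solution $u_{n}\in V$ of (\ref{en11}).

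For convergence, take any sequence $u_{n}\in V$ with $u_{n}=\mathcal{T}_{n}\mathcal{F}(u_{n})$. Writing $u_{n}=\mathcal{TF}(u_{n})+\left(\mathcal{T}-\mathcal{T}_{n}\right)\mathcal{F}(u_{n})$, the second term tends to $0$ by the displayed estimate, while $\{\mathcal{TF}(u_{n})\}$ is relatively compact because $\mathcal{T}$ is compact and $\{\mathcal{F}(u_{n})\}$ is bounded; hence $\{u_{n}\}$ is relatively compact. Any subsequential limit $\bar{u}\in\overline{V}$ satisfies $\bar{u}=\mathcal{TF}(\bar{u})$ by continuity of $\mathcal{F}$ and $\mathcal{T}$, so $\bar{u}=u^{\ast}$ by the uniqueness arranged above; a standard subsequence argument then gives $u_{n}\rightarrow u^{\ast}$. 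I expect the main obstacle to be not a new idea but the correct bookkeeping that ties (\ref{en9}) to the degree-theoretic hypotheses of \cite{atkin1973}: the uniform operator convergence and the complete continuity of $\mathcal{TF}$ must simultaneously supply admissibility of the homotopy (for existence) and relative compactness of $\{u_{n}\}$ (for convergence), with the delicate point being the strictly positive lower bound $\delta$ on $\partial V$.
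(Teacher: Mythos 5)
Your proposal is correct and follows essentially the same route as the paper, which simply cites \cite[Chap.~3, Sec.~3]{Kras1964} and \cite[Thm.~3]{atkin1973} together with the norm convergence (\ref{en9}); the degree-theoretic existence argument and the compactness argument for convergence that you write out are precisely the content of those cited results. You have in effect supplied the details the paper leaves to the references, including the correct observation that complete continuity is what makes $\delta>0$ on $\partial V$ and that boundedness of $\mathcal{F}$ on bounded sets follows from its being a continuous Nemytskii operator.
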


\begin{proof}
This is an application of the methods of \cite[Chap. 3, Sec. 3]{Kras1964} or
\cite[Thm. 3]{atkin1973}. A sufficient requirement is the norm convergence of
$\mathcal{T}_{n}$ to $\mathcal{T}$, given in (\ref{en9}); \cite[Thm.
3]{atkin1973} uses a weaker form of (\ref{en9}). $\left.  {}\right.  $%
\hfill\medskip
\end{proof}

The most standard case of a nonzero index involves a consideration of the
Frechet derivative of $\mathcal{F}$; see \cite[\S 5.3]{atkinson-han}. In
particular, the linear operator$\mathcal{F}^{\prime}\left(  v\right)  $ is
given by
\[
\left(  \mathcal{F}^{\prime}\left(  v\right)  w\right)  \left(  x\right)
=\left.  \frac{\partial f\left(  x,z\right)  }{\partial z}\right\vert
_{z=v(x)}\times w(x)
\]

\begin{theorem}
\label{ThmDir}Assume the problem (\ref{en3}) has a solution $u^{\ast}$ that is
unique within some open neighborhood $V$ of $u^{\ast}$; and further assume
that $I-\mathcal{TF}^{\prime}\left(  u^{\ast}\right)  $ is invertible over
$L_{2}\left(  \Omega\right)  $. Then $u^{\ast}$ has a nonzero index. Moreover,
for all sufficiently large $n$ there is a unique solution $u_{n}^{\ast}$ to
(\ref{en11}) within $V$, and $u_{n}^{\ast}$ converges to $u^{\ast}$ with
\begin{align}
\left\Vert u^{\ast}-u_{n}^{\ast}\right\Vert _{L_{2}\left(  \mathbb{B}%
^{d}\right)  }  &  \leq c\left\Vert \left(  \mathcal{T}-\mathcal{T}%
_{n}\right)  \mathcal{F}\left(  u^{\ast}\right)  \right\Vert _{L_{2}\left(
\mathbb{B}^{d}\right)  }\nonumber\\
&  \leq\frac{c}{n^{2}}\left\Vert \mathcal{F}\left(  u^{\ast}\right)
\right\Vert _{L_{2}\left(  \mathbb{B}^{d}\right)  }\label{en11a}%
\end{align}

\end{theorem}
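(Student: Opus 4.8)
The plan is to establish the three assertions—nonzero index, existence and uniqueness of $u_n^*$ near $u^*$, and the error bound—by specializing the general theory for approximating isolated fixed points of compact nonlinear operators from \cite[Chap. 3]{Kras1964} and \cite{atkin1973} to our operators $\mathcal{TF}$ and $\mathcal{T}_n\mathcal{F}$, supplying the verifications those results require. First I would dispose of the index. By the chain rule the Frechet derivative of the compact map $\mathcal{TF}$ at $u^*$ is $(\mathcal{TF})^{\prime}(u^*) = \mathcal{T}\mathcal{F}^{\prime}(u^*)$, and since $\mathcal{T}$ is compact this linear operator is compact on $L_2(\mathbb{B}^d)$. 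The hypothesis that $I - \mathcal{T}\mathcal{F}^{\prime}(u^*)$ is invertible says exactly that $1$ is not an eigenvalue of $\mathcal{T}\mathcal{F}^{\prime}(u^*)$. By the Leray--Schauder index formula for a fixed point of a compact operator that is Frechet differentiable there (see \cite[Chap. 3]{Kras1964}), the index of $u^*$ equals $(-1)^{\beta}$, where $\beta$ is the total algebraic multiplicity of the real eigenvalues of $\mathcal{T}\mathcal{F}^{\prime}(u^*)$ lying in $(1,\infty)$; this is $\pm 1$, hence nonzero. This feeds Theorem \ref{thm2} to give solutions $u_n$ in $V$ converging to $u^*$, which the remaining steps will refine.

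Next, for the quantitative statement I would subtract the fixed-point equations (\ref{en10}) and (\ref{en11}) and split
\[
u^* - u_n^* = \mathcal{T}\bigl[\mathcal{F}(u^*)-\mathcal{F}(u_n^*)\bigr] + (\mathcal{T}-\mathcal{T}_n)\mathcal{F}(u_n^*).
\]
Expanding the first bracket as $\mathcal{F}(u^*)-\mathcal{F}(u_n^*) = \mathcal{F}^{\prime}(u^*)(u^*-u_n^*) + R_n$ with $R_n = o(\|u^*-u_n^*\|)$ as $u_n^* \to u^*$, I would move the linear term to the left:
\[
\bigl[I - \mathcal{T}\mathcal{F}^{\prime}(u^*)\bigr](u^*-u_n^*) = \mathcal{T}R_n + (\mathcal{T}-\mathcal{T}_n)\mathcal{F}(u_n^*).
\]
Since $I - \mathcal{T}\mathcal{F}^{\prime}(u^*)$ has a bounded inverse, applying it and absorbing the higher-order term $\mathcal{T}R_n$ into the left side for $n$ large yields $\|u^*-u_n^*\|_{L_2(\mathbb{B}^d)} \leq c\,\|(\mathcal{T}-\mathcal{T}_n)\mathcal{F}(u_n^*)\|_{L_2(\mathbb{B}^d)}$. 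Replacing $\mathcal{F}(u_n^*)$ by $\mathcal{F}(u^*)$ (their difference is controlled by the continuity of $\mathcal{F}$ together with the already-established convergence $u_n^* \to u^*$) and invoking the norm bound (\ref{en9}) then produces the two displayed inequalities of the theorem.

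The uniqueness of $u_n^*$ within $V$ (shrinking $V$ if necessary) follows from the same invertibility: if $u_n^{(1)},u_n^{(2)}$ both solve (\ref{en11}) in $V$, the integral mean-value form of $\mathcal{F}$ gives $\bigl[I - \mathcal{T}_n\overline{B}_n\bigr](u_n^{(1)}-u_n^{(2)}) = 0$ with $\overline{B}_n = \int_0^1 \mathcal{F}^{\prime}\bigl(u_n^{(2)}+t(u_n^{(1)}-u_n^{(2)})\bigr)\,dt$; since $\mathcal{T}_n\overline{B}_n$ is close in norm to $\mathcal{T}\mathcal{F}^{\prime}(u^*)$ for $n$ large (by (\ref{en9}) and the continuity of $\mathcal{F}^{\prime}$ near $u^*$), the operator $I - \mathcal{T}_n\overline{B}_n$ inherits invertibility via a Neumann-series perturbation, forcing $u_n^{(1)}=u_n^{(2)}$. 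The step I expect to be most delicate is the absorption of the nonlinear remainder $\mathcal{T}R_n$: it requires a quantitative modulus-of-continuity estimate on $\mathcal{F}^{\prime}$ near $u^*$ so that $\|R_n\|_{L_2(\mathbb{B}^d)}$ is genuinely dominated by the invertible principal part once $n$ is large. This in turn rests on the smoothness of $f(x,z)$ in its second argument, which makes the Nemytskii operator $\mathcal{F}$ continuously Frechet differentiable on the relevant open set $D \subseteq L_2(\mathbb{B}^d)$.
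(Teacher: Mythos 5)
Your proposal is correct and follows essentially the same route as the paper: the paper's own proof consists of a single appeal to \cite[Chap. 3, Sec. 3]{Kras1964} and \cite[Thm. 4]{atkin1973}, and what you have written is a faithful unpacking of exactly those cited results (Leray--Schauder index $(-1)^{\beta}$, linearization of the fixed-point equations around $u^{\ast}$ with inversion of $I-\mathcal{TF}^{\prime}(u^{\ast})$, and Neumann-series perturbation for uniqueness), combined with the norm convergence (\ref{en9}). The only point to watch is that in replacing $\mathcal{F}(u_{n}^{\ast})$ by $\mathcal{F}(u^{\ast})$ you need local Lipschitz continuity of $\mathcal{F}$ (which your continuous-differentiability hypothesis supplies), not mere continuity, so that the discrepancy $\left\Vert \mathcal{T}-\mathcal{T}_{n}\right\Vert \left\Vert \mathcal{F}(u_{n}^{\ast})-\mathcal{F}(u^{\ast})\right\Vert$ is absorbable into the left-hand side.
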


\begin{proof}
Again this is an immediate application of results in \cite[Chap. 3, Sec.
3]{Kras1964} or \cite[Thm. 4]{atkin1973}. $\left.  {}\right.  $\hfill\medskip
\end{proof}

To improve upon this last result, we need to bound $\left\Vert \left(
\mathcal{T}-\mathcal{T}_{n}\right)  g\right\Vert _{L_{2}\left(  \mathbb{B}%
^{d}\right)  }$ when $g\in H^{r}$ for some $r\geq1$. \ Adapting the proof of
\cite[(4.9)]{Osborn1975} to our polynomial approximations and using Theorem
\ref{Thm1b},
\[
\left\Vert \left(  \mathcal{T}-\mathcal{T}_{n}\right)  g\right\Vert
_{H^{1}\left(  \mathbb{B}^{d}\right)  }\leq\frac{c}{n^{r+1}}\left\Vert
g\right\Vert _{H^{r}\left(  \mathbb{B}^{d}\right)  .}%
\]
Using the conservative bound%
\[
\left\Vert v\right\Vert _{L_{2}\left(  \mathbb{B}^{d}\right)  }\leq\left\Vert
v\right\Vert _{H^{1}\left(  \mathbb{B}^{d}\right)  },
\]
we have
\begin{equation}
\left\Vert \left(  \mathcal{T}-\mathcal{T}_{n}\right)  g\right\Vert
_{L_{2}\left(  \mathbb{B}^{d}\right)  }\leq\frac{c}{n^{r+1}}\left\Vert
g\right\Vert _{H^{r}\left(  \mathbb{B}^{d}\right)  .}\label{en12}%
\end{equation}

\begin{corollary}
\label{CorDir}For some $r\geq0$, assume $\mathcal{F}\left(  u^{\ast}\right)
\in H^{r}\left(  \mathbb{B}^{d}\right)  $. Then%
\begin{equation}
\left\Vert u^{\ast}-u_{n}^{\ast}\right\Vert _{L_{2}\left(  \mathbb{B}%
^{d}\right)  }\leq\mathcal{O}\left(  n^{-(r+1)}\right)  \left\Vert
\mathcal{F}\left(  u^{\ast}\right)  \right\Vert _{H^{r}\left(  \mathbb{B}%
^{d}\right)  }.\label{en13}%
\end{equation}

\end{corollary}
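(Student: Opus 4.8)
The plan is to observe that Theorem~\ref{ThmDir} has already isolated the essential estimate, namely the first line of~(\ref{en11a}),
\[
\left\Vert u^{\ast}-u_{n}^{\ast}\right\Vert _{L_{2}\left(  \mathbb{B}^{d}\right)  }\leq c\left\Vert \left(  \mathcal{T}-\mathcal{T}_{n}\right)  \mathcal{F}\left(  u^{\ast}\right)  \right\Vert _{L_{2}\left(  \mathbb{B}^{d}\right)  },
\]
which holds for all sufficiently large $n$ with a constant $c$ independent of $n$; this uniformity is exactly what the invertibility of $I-\mathcal{TF}^{\prime}(u^{\ast})$ buys us through the Krasnoselskii--Atkinson machinery cited in the proof of that theorem. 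Crucially, this inequality makes no use of any smoothness of $\mathcal{F}(u^{\ast})$: the rate $n^{-2}$ in the second line of~(\ref{en11a}) arose only from inserting the crude operator bound~(\ref{en9}), which treats $\mathcal{F}(u^{\ast})$ as a generic element of $L_{2}(\mathbb{B}^{d})$.

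First I would discard that crude bound and instead invoke the sharpened estimate~(\ref{en12}), itself obtained from Theorem~\ref{Thm1b} of Li and Xu together with the adaptation of Osborn's argument. Under the standing hypothesis $\mathcal{F}(u^{\ast})\in H^{r}(\mathbb{B}^{d})$, estimate~(\ref{en12}) applies verbatim with $g=\mathcal{F}(u^{\ast})$ and gives
\[
\left\Vert \left(  \mathcal{T}-\mathcal{T}_{n}\right)  \mathcal{F}\left(  u^{\ast}\right)  \right\Vert _{L_{2}\left(  \mathbb{B}^{d}\right)  }\leq\frac{c}{n^{r+1}}\left\Vert \mathcal{F}\left(  u^{\ast}\right)  \right\Vert _{H^{r}\left(  \mathbb{B}^{d}\right)  }.
\]
Chaining the two displayed inequalities then yields exactly~(\ref{en13}).

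There is essentially no obstacle at the level of the corollary itself: all of the analytic content has been front-loaded into~(\ref{en12}), and the proof reduces to the single substitution $g=\mathcal{F}(u^{\ast})$. The only genuine points to check are bookkeeping ones --- that the constant $c$ in the first line of~(\ref{en11a}) is uniform in $n$ for large $n$ (which is part of the conclusion of Theorem~\ref{ThmDir}), and that the hypothesis $\mathcal{F}(u^{\ast})\in H^{r}$ is precisely the regularity needed to legitimately apply~(\ref{en12}). If anything deserves flagging, it is that the substantive work lives one step earlier, in establishing~(\ref{en12}) from the Li--Xu approximation theorem and the Osborn-style duality argument for $\left\Vert (\mathcal{T}-\mathcal{T}_{n})g\right\Vert$; granting that, the corollary is immediate.
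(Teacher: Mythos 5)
Your proposal is correct and follows exactly the route the paper intends: the corollary is stated immediately after the derivation of~(\ref{en12}) precisely so that it follows by substituting $g=\mathcal{F}(u^{\ast})$ into~(\ref{en12}) and chaining with the first inequality of~(\ref{en11a}). Your bookkeeping remarks (uniformity of $c$ for large $n$, the $H^{r}$ hypothesis being exactly what~(\ref{en12}) requires) are apt and consistent with the paper's reading.
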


We conjecture that this bound and (\ref{en12}) can be improved to
$\mathcal{O}\left(  n^{-(r+2)}\right)  $. For the case $r=0$, an improved
result is given \ by (\ref{en11a}).

\subsection{A nonhomogeneous boundary condition}

Consider replacing the homogeneous boundary condition (\ref{en1a}) with the
nonhomogeneous condition
\[
u\left(  s\right)  =g\left(  s\right)  ,\quad\quad s\in\mathbb{\partial}%
\Omega,
\]
in which $g$ is a continuously differentiable function over $\partial\Omega$.
One possible approach to solving the Dirichlet problem with this nonzero
boundary condition is to begin by calculating a differentiable extension of
$g$, call it $G:\overline{\Omega}\rightarrow\mathbb{R}$, with%
\begin{align*}
G  &  \in C^{2}\left(  \overline{\Omega}\right)  ,\\
G\left(  s\right)   &  =g\left(  s\right)  ,\quad\quad s\in\partial\Omega
\end{align*}
With such a function $G$, introduce $v=u-G$ where $u$ satisfies (\ref{en1}%
)-(\ref{en1a}). Then $v$ satisfies the equation%
\begin{equation}
Lv\left(  s\right)  =f\left(  s,v(s)+G(s)\right)  -LG\left(  s\right)
,\quad\quad s\in\Omega,\label{en13a}%
\end{equation}%
\begin{equation}
v\left(  s\right)  =0,\quad\quad s\in\mathbb{\partial}\Omega.\label{en13b}%
\end{equation}
This problem is in the format of (\ref{en1})-(\ref{en1a}).

Sometimes finding an extension \ $G$ is straightforward; for example,
$g\equiv1$ over $\partial\Omega$ has the obvious extension $G\left(  s\right)
\equiv1$. Often, however, we must compute an extension. We begin by first
obtaining an extension $G$ using a method from \cite{ah2011}, and then we
approximate it with a polynomial of some reasonably low degree. For example,
see the construction of least squares approximants in \cite{ach2013}.

\section{Implementation\label{implement}}

We consider how to set up the nonlinear system of (\ref{en4})-(\ref{en6}) and
how to solve it. Because we intend to apply the method to problems defined
initially over a region $\Omega$ other than $\mathbb{B}^{d}$, we re-write
(\ref{en4})-(\ref{en6}) for this situation. The transformed equation we are
considering is the equation (\ref{en8a}). We look for a solution
\[
\widetilde{u}_{n}\left(  x\right)  =\sum_{\ell=1}^{N_{n}}\alpha_{\ell}%
\psi_{\ell}\left(  x\right)  ,
\]
and $u_{n}\left(  s\right)  $ is to be the equivalent solution considered over
$\Omega$: $\widetilde{u}_{n}\left(  x\right)  \equiv u_{n}\left(  \Phi\left(
x\right)  \right)  $, $x\in\mathbb{B}^{d}$. The coefficients $\left\{
\alpha_{\ell}|\ell=1,2,\dots,N_{n}\right\}  $ are the solutions of
\begin{equation}%
\begin{array}
[c]{r}%
{\displaystyle\sum\limits_{k=1}^{N_{n}}}
\alpha_{k}%
{\displaystyle\int_{\mathbb{B}^{d}}}
\left[
{\displaystyle\sum\limits_{i,j=1}^{d}}
\det J\left(  x\right)  \,\widetilde{a}_{i,j}(x)\dfrac{\partial\psi_{k}%
(x)}{\partial x_{j}}\dfrac{\partial\psi_{\ell}(x)}{\partial x_{i}%
}+\widetilde{\gamma}(x)\psi_{k}(x)\psi_{\ell}(x)\right.  \smallskip\\
\left.  +\widetilde{\gamma}(x)\psi_{k}(x)\psi_{\ell}(x)\right]  \,dx\quad
\quad\quad\quad\smallskip\\
=%
{\displaystyle\int_{\mathbb{B}^{d}}}
\widetilde{f}\left(  x,%
{\displaystyle\sum\limits_{k=1}^{N_{n}}}
\alpha_{k}\psi_{k}\left(  x\right)  \right)  \psi_{\ell}\left(  x\right)
\,dx,\quad\quad\ell=1,\dots,N_{n}%
\end{array}
\label{e78}%
\end{equation}
For the definitions of $\widetilde{\gamma}$, $\widetilde{f}$, and
$\widetilde{A}\left(  x\right)  \equiv\left[  \widetilde{a}_{i,j}(x)\right]
_{i,j=1}^{d}$, recall (\ref{en8aa})-(\ref{en8ac}).

When solving the nonlinear system (\ref{e78}), it is necessary to have an
initial guess $\widetilde{u}_{n}^{(0)}\left(  x\right)  =\sum_{\ell=1}^{N_{n}%
}\alpha_{\ell}^{(0)}\psi_{\ell}\left(  x\right)  $. In our examples, we begin
with a very small value for $n$ (say $n=1$), use $\widetilde{u}_{n}^{(0)}=0$,
and then solve (\ref{e78}) by some iterative method. Then increase $n$, using
as an initial guess the final solution obtained with a preceding $n$. This has
worked well in our computations, allowing us to work our way to the solution
of (\ref{e78}) for much larger values of $n$. For the iterative solver, we
have used the \textsc{Matlab} program \texttt{fsolve}, but will work in the
future on improving it.

\subsection{Planar problems}

The dimension of $\Pi_{n}^{2}$ is
\[
N_{n}=\frac{1}{2}\left(  n+1\right)  \left(  n+2\right)
\]
For notation, we replace $x$ with $\left(  x,y\right)  $. We create a basis
for $\mathcal{X}_{n}$ by first choosing an orthonormal basis for $\Pi_{n}^{2}
$, say $\left\{  \varphi_{m,k}|k=0,1,\dots,m;\,m=0,1,\dots,n\right\}  $. Then
define%
\begin{equation}
\psi_{m,k}\left(  x,y\right)  =\left(  1-x^{2}-y^{2}\right)  \varphi
_{m,k}\left(  x,y\right)  .\label{en75}%
\end{equation}
How do we choose the orthonormal basis $\left\{  \varphi_{\ell}(x,y)\right\}
_{\ell=1}^{N}$ for $\Pi_{n}^{2}$? Unlike the situation for the single variable
case, there are many possible orthonormal bases over $\mathbb{B}^{2} $, the
unit disk in $\mathbb{R}^{2}$. We have chosen one that is convenient for our
computations. These are the "ridge polynomials" introduced by Logan and\ Shepp
\cite{Loga} for solving an image reconstruction problem. A choice that is more
efficient in calculational costs is given in \cite{ach2013}; but we continue
to use the ridge polynomials because we are re-using and modifying computer
code written previously for use in \cite{ach2008}, \cite{ach2013},
\cite{ahc2009}, and \cite{ahc2013}.

We summarize here the results needed for our work. For general $d\geq2$, Let
\[
\mathcal{V}_{n}=\left\{  P\in\Pi_{n}^{d}:\left(  P,Q\right)  =0\quad\forall
Q\in\Pi_{n-1}^{d}\right\}
\]
the polynomials of degree $n$ that are orthogonal to all elements of
$\Pi_{n-1}^{d}$. Then
\begin{equation}
\Pi_{n}^{d}=\mathcal{V}_{0}\oplus\mathcal{V}_{1}\oplus\cdots\oplus
\mathcal{V}_{n}\label{e100}%
\end{equation}
is a decomposition of $\Pi_{n}^{d}$ into orthonormal subspaces. \ It is
standard to construct orthonormal bases of each $\mathcal{V}_{n}$ and to then
combine them to form an orthonormal basis of $\Pi_{n}^{d}$ using this
decomposition. \ 

For $d=2$, $\mathcal{V}_{n}$ has dimension $n+1$, $n\geq0$. As an orthonormal
basis of $\mathcal{V}_{n}$ we use%
\begin{equation}
\varphi_{n,k}(x,y)=\frac{1}{\sqrt{\pi}}U_{n}\left(  x\cos\left(  kh\right)
+y\sin\left(  kh\right)  \right)  ,\quad\left(  x,y\right)  \in D,\quad
h=\frac{\pi}{n+1}\label{e101}%
\end{equation}
for $k=0,1,\dots,n$. The function $U_{n}$ is the Chebyshev polynomial of the
second kind of degree $n$:%
\[
U_{n}(t)=\frac{\sin\left(  n+1\right)  \theta}{\sin\theta},\quad\quad
t=\cos\theta,\quad-1\leq t\leq1,\quad n=0,1,\dots
\]
The family $\left\{  \varphi_{n,k}\right\}  _{k=0}^{n}$ is an orthonormal
basis of $\mathcal{V}_{n}$.

As a basis of $\Pi_{n}^{2}$, we order $\left\{  \varphi_{m,k}\right\}  $
lexicographically based on the ordering in (\ref{e101}) and (\ref{e100}):%
\[
\left\{  \varphi_{\ell}\right\}  _{\ell=1}^{N}=\left\{  \varphi_{0,0}%
,\,\varphi_{1,0},\,\varphi_{1,1},\,\varphi_{2,0},\,\dots,\,\varphi
_{n,0},\,\dots,\varphi_{n,n}\right\}
\]
From (\ref{en75}), the family $\left\{  \psi_{m,k}\right\}  $ is ordered the same.

To calculate the first order partial derivatives of $\psi_{n,k}(x,y)$, we need
$U_{n}^{^{\prime}}(t)$. The values of $U_{n}(t)$ and $U_{n}^{^{\prime}}(t)$
are evaluated using the standard triple recursion relations%
\begin{align*}
U_{n+1}(t)  &  =2tU_{n}(t)-U_{n-1}(t)\\
U_{n+1}^{^{\prime}}(t)  &  =2U_{n}(t)+2tU_{n}^{^{\prime}}(t)-U_{n-1}%
^{^{\prime}}(t)
\end{align*}

For the numerical approximation of the integrals in (\ref{e78}), which are
over $\mathbb{B}^{2}$, the unit disk, we use the formula%
\begin{equation}
\int_{\mathbb{B}^{2}}g(x,y)\,dx\,dy\approx\sum_{l=0}^{q}\sum_{m=0}%
^{2q}g\left(  r_{l},\frac{2\pi\,m}{2q+1}\right)  \omega_{l}\frac{2\pi}%
{2q+1}r_{l}\label{e106}%
\end{equation}
Here the numbers $r_{l}$ and $\omega_{l}$ are the nodes and weights of the
$\left(  q+1\right)  $-point Gauss-Legendre quadrature formula on $[0,1]$.
Note that
\[
\int_{0}^{1}p(x)dx=\sum_{l=0}^{q}p(r_{l})\omega_{l},
\]
for all single-variable polynomials $p(x)$ with $\deg\left(  p\right)
\leq2q+1 $. The formula (\ref{e106}) uses the trapezoidal rule with $2q+1$
subdivisions for the integration over $\mathbb{B}^{2}$ in the azimuthal
variable. This quadrature (\ref{e106}) is exact for all polynomials $g\in
\Pi_{2q}^{2}$.

\subsection{The three--dimensional case\label{3D case}}

We change our notation, replacing $x\in\mathbb{B}^{3}$ with $\left(
x,y,z\right)  $. In $\mathbb{R}^{3}$, the dimension of $\Pi_{n}^{3}$ is
\[
N_{n}=\binom{n+3}{3}=\frac{1}{6}\left(  n+1\right)  \left(  n+2\right)
\left(  n+3\right)  .
\]
Here we choose orthonormal polynomials on the unit ball as described in
\cite{DX},
\begin{align}
\varphi_{n,j,k}(x)  &  =\frac{1}{h_{n,j,k}}C_{n-j-k}^{j+k+\frac{3}{2}%
}(x)(1-x^{2})^{\frac{j}{2}}\times\nonumber\\
&  \makebox[1cm]{}C_{j}^{k+1}(\frac{y}{\sqrt{1-x^{2}}})(1-x^{2}-y^{2}%
)^{k/2}C_{k}^{\frac{1}{2}}(\frac{z}{\sqrt{1-x^{2}-y^{2}}})\label{e1001}\\
j,k  &  =0,\ldots,n,\quad j+k\leq n,\quad n\in\mathbb{N}\nonumber
\end{align}
The function $\varphi_{n,j,k}(x)$ is a polynomial of degree $n$, $h_{n,j,k}$
is a normalization constant, and the functions $C_{i}^{\lambda}$ are the
Gegenbauer polynomials. The orthonormal base $\{\varphi_{n,j,k}\}_{n,j,k}$ and
its properties can be found in \cite[Chapter 2]{DX}.

We can order the basis lexicographically. To calculate these polynomials we
use a three--term recursion whose coefficients are given in \cite{ach2013}.

For the numerical approximation of the integrals in (\ref{e78}), we use a
quadrature formula for the unit ball $\mathbb{B}^{3}$,
\begin{align*}
\int_{\mathbb{B}^{3}}g(x)\,dx  &  =\int_{0}^{1}\int_{0}^{2\pi}\int_{0}^{\pi
}\widetilde{g}(r,\theta,\phi)\,r^{2}\sin(\phi)\,d\phi\,d\theta\,dr\approx
Q_{q}[g],\medskip\\
Q_{q}[g]  &  :=\sum_{i=1}^{2q}\sum_{j=1}^{q}\sum_{k=1}^{q}\frac{\pi}%
{q}\,\omega_{j}\,\nu_{k}\widetilde{g}\left(  \frac{\zeta_{k}+1}{2},\frac
{\pi\,i}{2q},\arccos(\xi_{j})\right)  .
\end{align*}
Here $\widetilde{g}(r,\theta,\phi)=g(x)$ is the representation of $g$ in
spherical coordinates. For the $\theta$ integration we use the trapezoidal
rule, because the function is $2\pi-$periodic in $\theta$. For the $r$
direction we use the transformation
\begin{align*}
\int_{0}^{1}r^{2}v(r)\;dr  &  =\int_{-1}^{1}\left(  \frac{t+1}{2}\right)
^{2}v\left(  \frac{t+1}{2}\right)  \frac{dt}{2}\medskip\\
&  =\frac{1}{8}\int_{-1}^{1}(t+1)^{2}v\left(  \frac{t+1}{2}\right)
\;dt\medskip\\
&  \approx\sum_{k=1}^{q}\underset{_{=:\nu_{k}}}{\underbrace{\frac{1}{8}\nu
_{k}^{\prime}}}v\left(  \frac{\zeta_{k}+1}{2}\right)  ,
\end{align*}
where the $\nu_{k}^{\prime}$ and $\zeta_{k}$ are the weights and the nodes of
the Gauss quadrature with $q$ nodes on $[-1,1]$ with respect to the inner
product
\[
(v,w)=\int_{-1}^{1}(1+t)^{2}v(t)w(t)\,dt.
\]
The weights and nodes also depend on $q$ but we omit this index. For the
$\phi$ direction we use the transformation
\[
\int_{0}^{\pi}\sin(\phi)v(\phi)\,d\phi=\int_{-1}^{1}v(\arccos(\phi
))\,d\phi\approx\sum_{j=1}^{q}\omega_{j}v(\arccos(\xi_{j})),
\]
where the $\omega_{j}$ and $\xi_{j}$ are the nodes and weights for the
Gauss--Legendre quadrature on $[-1,1]$. For more information on this
quadrature rule on the unit ball in $\mathbb{R}^{3}$, see \cite{stroud}.

Finally we need the gradient to approximate the integral in (\ref{e78}). To do
this one can modify the three--term recursion in \cite{ach2013} to calculate
the partial derivatives of $\varphi_{n,j,k}(x)$.

\section{Numerical examples\label{NumExam}}

We begin with a planar example. Consider the problem%
\begin{equation}%
\begin{tabular}
[c]{cc}%
$-\Delta u\left(  s,t\right)  =f\left(  s,t,u\left(  s,t\right)  \right)  ,$ &
$\quad\left(  s,t\right)  \in\Omega$\\
$u\left(  s,t\right)  =0,$ & $\quad\left(  s,t\right)  \in\partial\Omega$%
\end{tabular}
\ \ \ \label{e120}%
\end{equation}
Note the change in notation, from $s\in\mathbb{R}^{2}$ to $\left(  s,t\right)
\in\mathbb{R}^{2}$.

As an illustrative region $\Omega$, we use the mapping $\Phi:\overline
{\mathbb{B}}^{2}\rightarrow\overline{\Omega}$, $\left(  s,t\right)
=\Phi\left(  x,y\right)  $,%
\begin{equation}%
\begin{array}
[c]{l}%
s=x-y+ax^{2}\\
t=x+y
\end{array}
\label{e132}%
\end{equation}
with $0<a<1$. It can be shown that $\Phi$ is a 1-1 mapping from the unit disk
$\overline{\mathbb{B}}^{2}$. In particular, the inverse mapping $\Psi
:\overline{\Omega}\rightarrow\overline{\mathbb{B}}^{2}$ is given by%
\begin{equation}%
\begin{array}
[c]{l}%
x=\dfrac{1}{a}\left[  -1+\sqrt{1+a\left(  s+t\right)  }\right]  \medskip\\
y=\dfrac{1}{a}\left[  at-\left(  -1+\sqrt{1+a\left(  s+t\right)  }\right)
\right]
\end{array}
\label{e133}%
\end{equation}

In Figure \ref{IntroOmega}(a), the mapping for $a=0.95$ is illustrated by
giving the images in $\overline{\Omega}$ of the circles $r=j/10$,
$j=1,\dots,10$ and the radial lines $\theta=j\pi/10$, $j=1,\dots,20$. An
alternative polynomial mapping $\Phi_{II}$ of degree 2 for this region is
computed using the integration/interpolation method of \cite[\S 3]{ah2011};
and $\Phi_{II}=\Phi$. on the boundary.$\partial\Omega$ as defined by
(\ref{e132}). It is illustrated in Figure \ref{IntroOmega}(b). This boundary
mapping $\Phi_{II}$ results in better error characteristics for our spectral
method as compared to the transformation $\Phi$.%

\begin{figure}[tbp] \centering
\begin{tabular}
[c]{ll}%
\raisebox{0.0104in}{\parbox[b]{2.0003in}{\begin{center}
\includegraphics[
height=2.0003in,
width=2.0003in
]%
{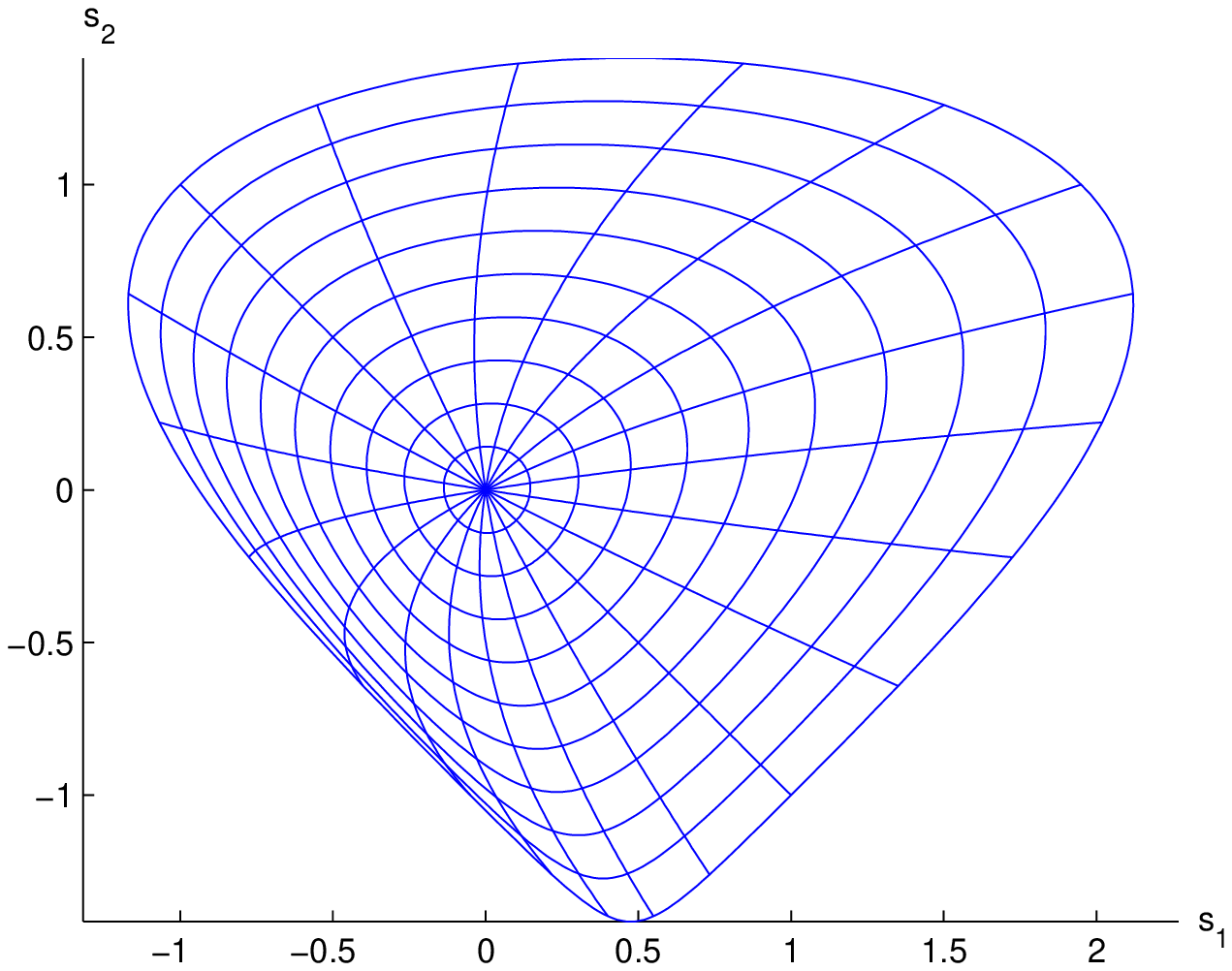}%
\\
(a) $\Phi$%
\end{center}}}
&
{\parbox[b]{2.0003in}{\begin{center}
\includegraphics[
height=2.0003in,
width=2.0003in
]%
{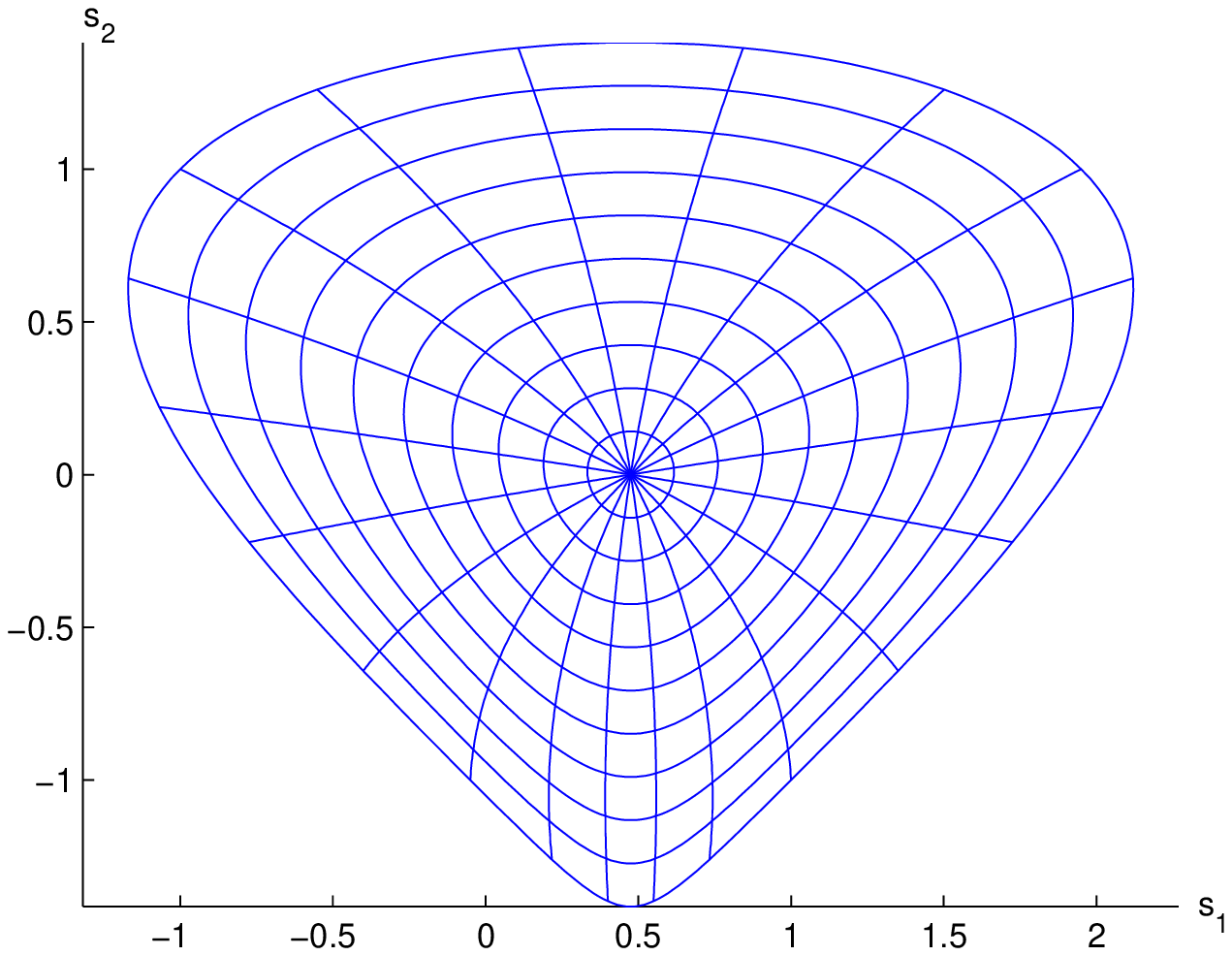}%
\\
(b) $\Phi_{II}$%
\end{center}}}
\end{tabular}
\caption{Illustrations of mappings on $\mathbb{B}^{2}$ for the region
$\Omega$ given by (\ref{e132})}\label{IntroOmega}%
\end{figure}%

As discussed earlier, we solve the nonlinear system (\ref{e78}) for a lower
value of the degree $n$, usually with an initial guess associated with
$u_{n}^{(0)}=0.$ As we increase $n$, we use the approximate solution from a
preceding $n$ to generate an initial guess for the new value of $n$. We use
the \textsc{Matlab} program \texttt{fsolve} to solve the nonlinear system. In
the future we plan to look at other numerical methods that take advantage of
the special structure of (\ref{e78}). \ To estimate the error, we use as a
true solution a numerical solution associated with a larger value of $n$.

For a particular case, consider the case%
\begin{equation}
f\left(  s,t,z\right)  =\frac{\cos\left(  \pi\,st\right)  }{1+z^{2}%
}\label{e135}%
\end{equation}
A graph of the solution is shown in Figure \ref{PlanarSoln}, along with
numerical results for $n=5,6,\dots,20$, with the solution $u_{25}$ taken as
the true solution. We use both the mapping $\Phi$ of (\ref{e132}) and the
mapping $\Phi_{II}$. \ Using either of the mappings, $\Phi$ or $\Phi_{II}$,
the graphs indicate an exponential rate of convergence for the mappings
$\left\{  u_{n}\right\}  $. The mapping $\Phi_{II}$ is better behaved, as can
be seen by visually comparing the graphs in \ref{IntroOmega}. This is the
probable reason for the improved convergence of the spectral method when using
$\Phi_{II}$ in comparison to $\Phi$.%

\begin{figure}[tbp] \centering
\begin{tabular}
[c]{ll}%
{\parbox[b]{2.0003in}{\begin{center}
\includegraphics[
height=2.0003in,
width=2.0003in
]%
{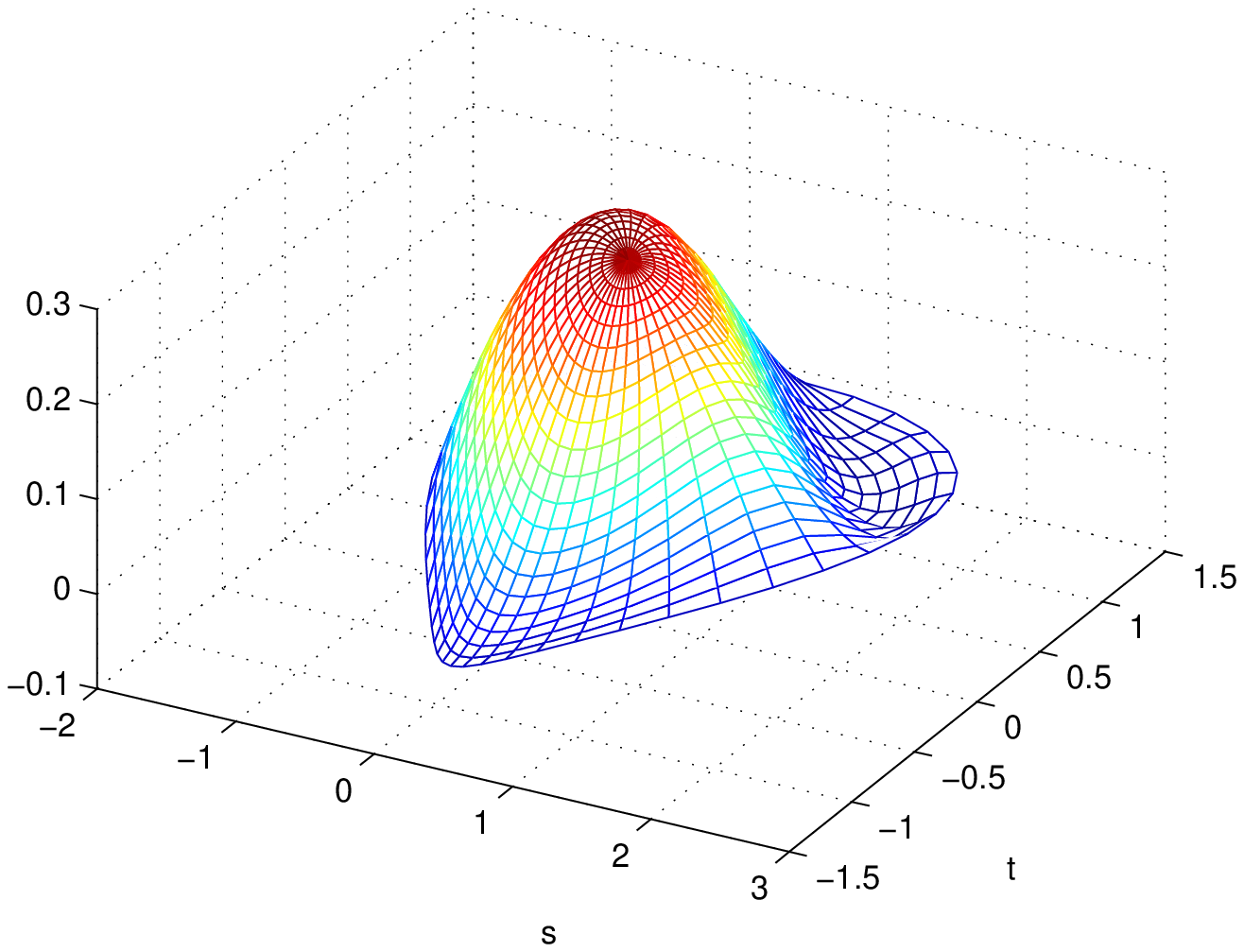}%
\\
The solution $u$%
\end{center}}}
&
{\parbox[b]{2.0003in}{\begin{center}
\includegraphics[
height=2.0003in,
width=2.0003in
]%
{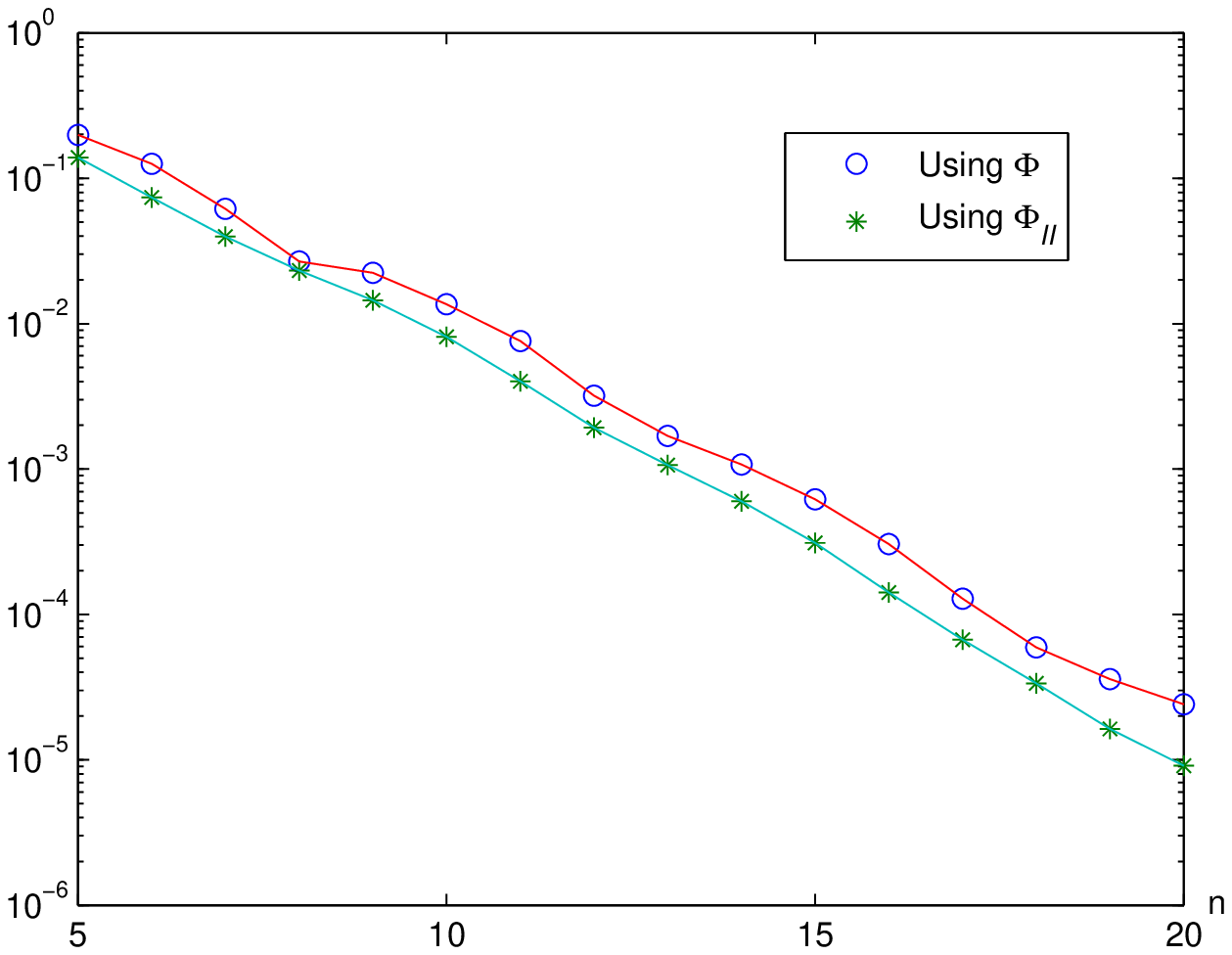}%
\\
The maximum error
\end{center}}}
\end{tabular}
\caption{The solution u to (\ref{e120}) with right side (\ref{e135}) and its
error}\label{PlanarSoln}%
\end{figure}%

As a second planar example we consider the stationary Fisher equation where
the function $f$ in (\ref{e120}) is given by
\[
f(s,t,u)=100u(1-u),\quad\quad(s,t)\in\Omega.
\]
Fisher's equation is used to model the spreading of biological populations and
from $f$ we see that $u=0$ and $u=1$ are stationary points for the time
dependent equation on an unbounded domain; see \cite[Chap. 17]{Kot}. The
original Fisher equation does not contain the term $100$, but for small
domains the Fisher equation might have no nontrivial solution and the factor
100 corresponds to a scaling by a factor 10 to guarantee the existence of a
nontrivial solution on the domain $\Omega$. The domain $\Omega$ is the
interior of the curve
\begin{equation}
\varphi(t)=(3+\cos(t)+2\sin(t))\left(  \cos t,\sin t\right) \label{e201}%
\end{equation}
We studied this domain in earlier papers (see \cite{ah2011}) where we called
this domain a `Limacon domain'. In the article \cite{ah2011} we also describe
how we use equation (\ref{e201}) to create a domain mapping $\Phi
:\overline{\mathbb{B}}^{2}\rightarrow\overline{\Omega}$ by two dimensional
interpolation. Similar to the previous example we calculate the numerical
solutions $u_{n}$ for $n=1,\ldots,40$, where we use the coefficients of
$u_{n-1}$ as a starting value $u_{n}^{(0)}$ for $n=2,\ldots,40$ and for
$u_{1}^{(0)}$ we use coefficients which are non zero (all equal to 10), so the
iteration of \texttt{fsolve} does not converge to the trivial solution. As a
reference solution we calculated $u_{45}$; see Figure \ref{FisherRef}.%

\begin{figure}[tbp] \centering
\begin{tabular}
[c]{ll}%
{\parbox[b]{2.0003in}{\begin{center}
\includegraphics[
height=2.0003in,
width=2.0003in
]%
{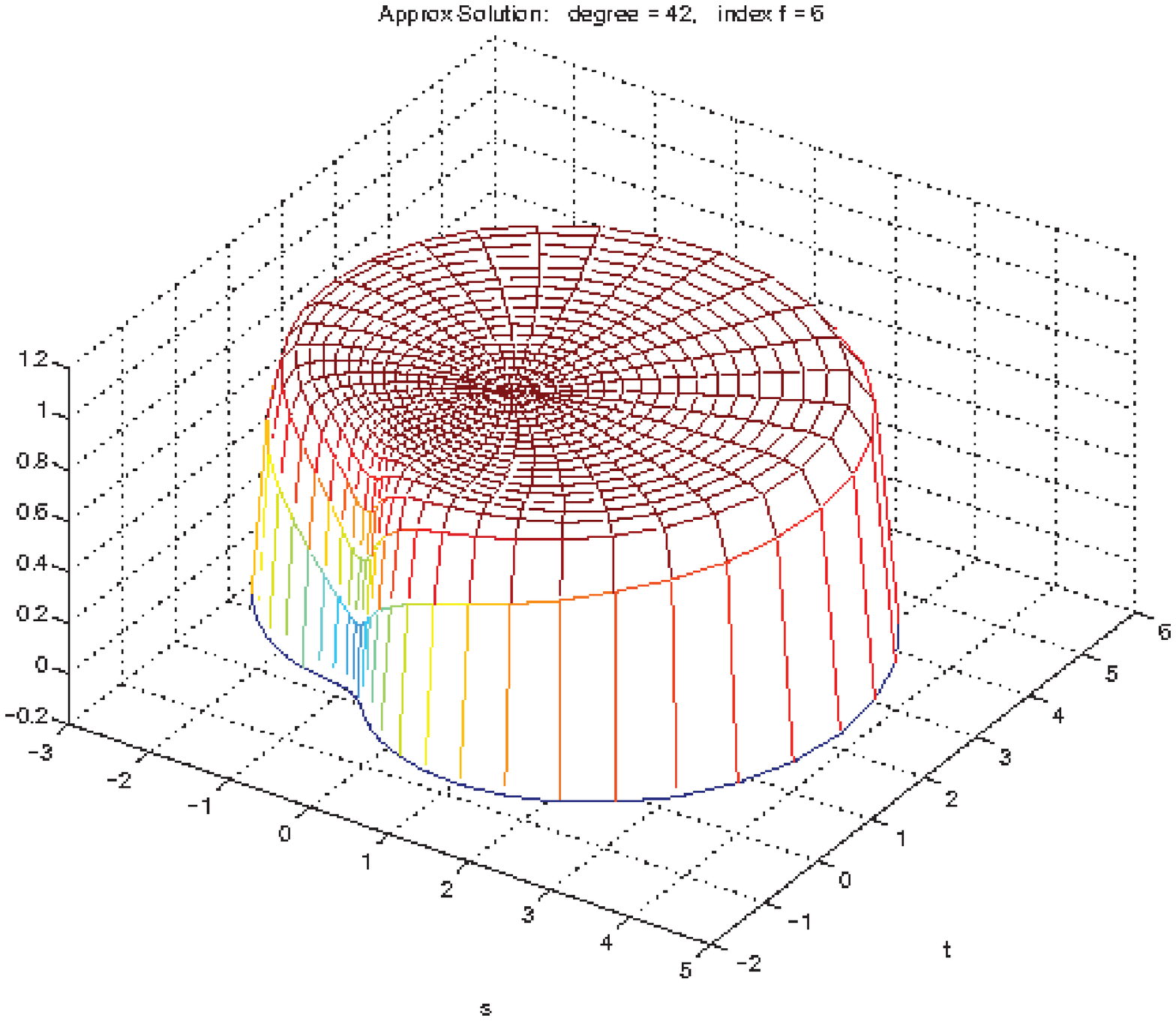}%
\\
Solution for Fisher's equation
\end{center}}}
&
{\parbox[b]{2.0003in}{\begin{center}
\includegraphics[
height=2.0003in,
width=2.0003in
]%
{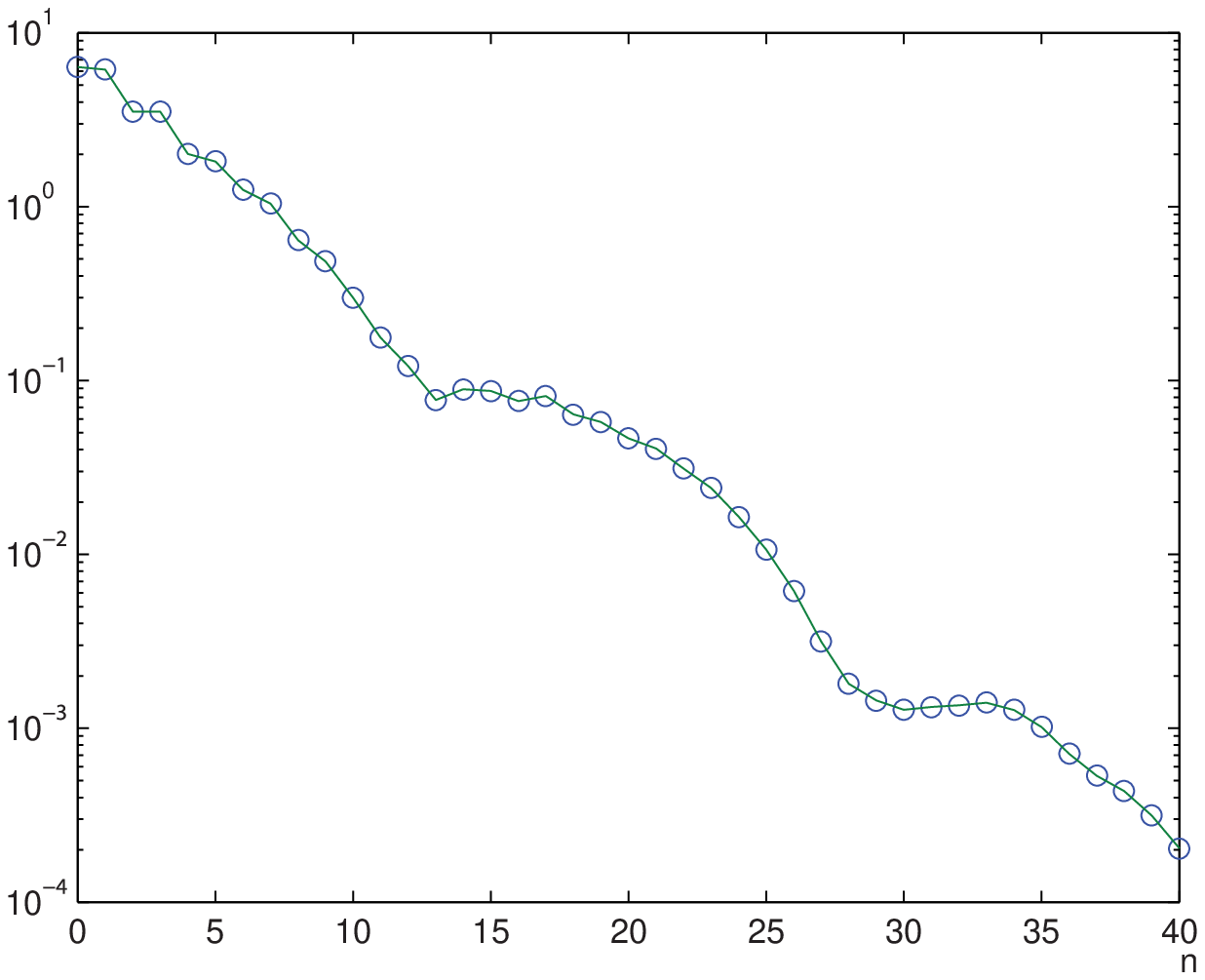}%
\\
Error for Fisher's equation
\end{center}}}
\end{tabular}
\caption{The reference solution and maximum error for Fisher's equation}\label{FisherRef}%
\end{figure}%

The shape of the solution is very much like we expect it, the function is
close to $1$ inside the domain $\Omega$ and drops off very steeply to the
boundary value $0$. By looking at the reference solution in Figure
\ref{FisherRef} we also see that the function will be harder to approximate by
polynomials than the function in the previous example, because of the sharp
drop off. This becomes clear when we look at the convergence, also shown in
Figure \ref{FisherRef}. The final error is in the range of $10^{-3}$%
--$10^{-4}$ with a polynomial degree of 40, so the error is in the same range
as in the previous example where we only used polynomials up to degree 20 for
the approximation. Still the graph suggests that the convergence is
exponential as predicted by (\ref{en13}) for the $L^{2}$ norm.

\subsection{A three dimensional example}

In the following we present a three dimensional example. We use the mapping
$\Phi:\overline{\mathbb{B}}^{3}\rightarrow\overline{\Omega}$, $\left(
s,t,v\right)  =\Phi\left(  x,y,z\right)  $, defined by
\begin{equation}%
\begin{array}
[c]{l}%
s=x-y+ax^{2}\\
t=x+y\\
v=2z+bz^{2}%
\end{array}
\label{e202}%
\end{equation}
where $a=b=0.5$. We have used this mapping in a previous article, see
\cite{ach2008}, where one finds plots of the surface $\partial\Omega$. On
$\Omega$ we solve
\begin{equation}%
\begin{tabular}
[c]{cc}%
$-\Delta u\left(  s,t,v\right)  =f\left(  s,t,v,u\left(  s,t,v\right)
\right)  ,$ & $\quad\left(  s,t,v\right)  \in\Omega$\\
$u\left(  s,t,v\right)  =0,$ & $\quad\left(  s,t,v\right)  \in\partial\Omega$%
\end{tabular}
\label{e203}%
\end{equation}
where $f$ is defined by
\[
f(s,t,v,u)=\frac{\cos(6x+y+z)}{1+u^{2}},\quad\quad(s,t,v)\in\Omega
\]
We calculated approximate solutions $u_{1},\ldots,u_{20}$ and used $u_{25}$ as
a reference solution. In Figure \ref{Ex3D} we see the convergence in the
maximum norm on a grid in $\overline{\mathbb{B}}^{3}$. As in our previous
examples the graph suggests that we have exponential convergence.%

\begin{figure}[ptb]%
\centering
\includegraphics[
height=3in,
width=3.9998in
]%
{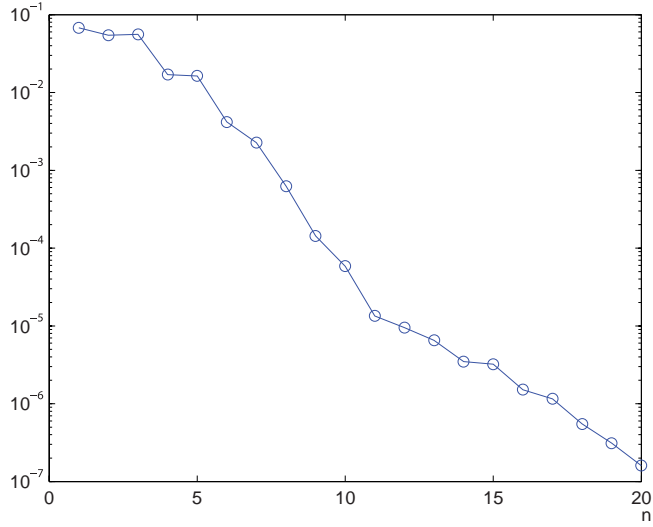}%
\caption{For the problem (\ref{e203}), the convergence of the \ errors
$\left\Vert u-u_{n}\right\Vert _{\infty}$.}%
\label{Ex3D}%
\end{figure}

In our final Figure \ref{Ex3Dplots} we show the graph of the reference
solution $u_{25}$ on $\overline{\mathbb{B}}^{3}\cap P_{\nu}$ where $P_{\nu} $
is a plane in $\mathbb{R}^{3}$ normal to the vector $\nu$. We have used
several normal vectors $\nu_{1}=(0,0,1)^{T}$, so $P_{\nu_{1}}$ is the
$xy$--plane, $\nu_{2}=(0,0,1)^{T}$, so $P_{\nu_{2}}$ is the $xz$--plane,
$\nu_{3}=(1,0,0)^{T}$, so $P_{\nu_{3}}$ is the $yz$--plane, and $\nu
_{4}=(1,1,1)^{T}$, so $P_{\nu_{4}}$ is a diagonal plane. Figure
\ref{Ex3Dplots} shows that the solution reflects the periodic character of the
nonlinearity $f$. In the yz--plane the oscillation of $f$ is much slower which
is also visible in the plot along the $yz$--plane.%

\begin{figure}[tbp] \centering
\begin{tabular}
[c]{ll}%
{\parbox[b]{2.0003in}{\begin{center}
\includegraphics[
height=2.0003in,
width=2.0003in
]%
{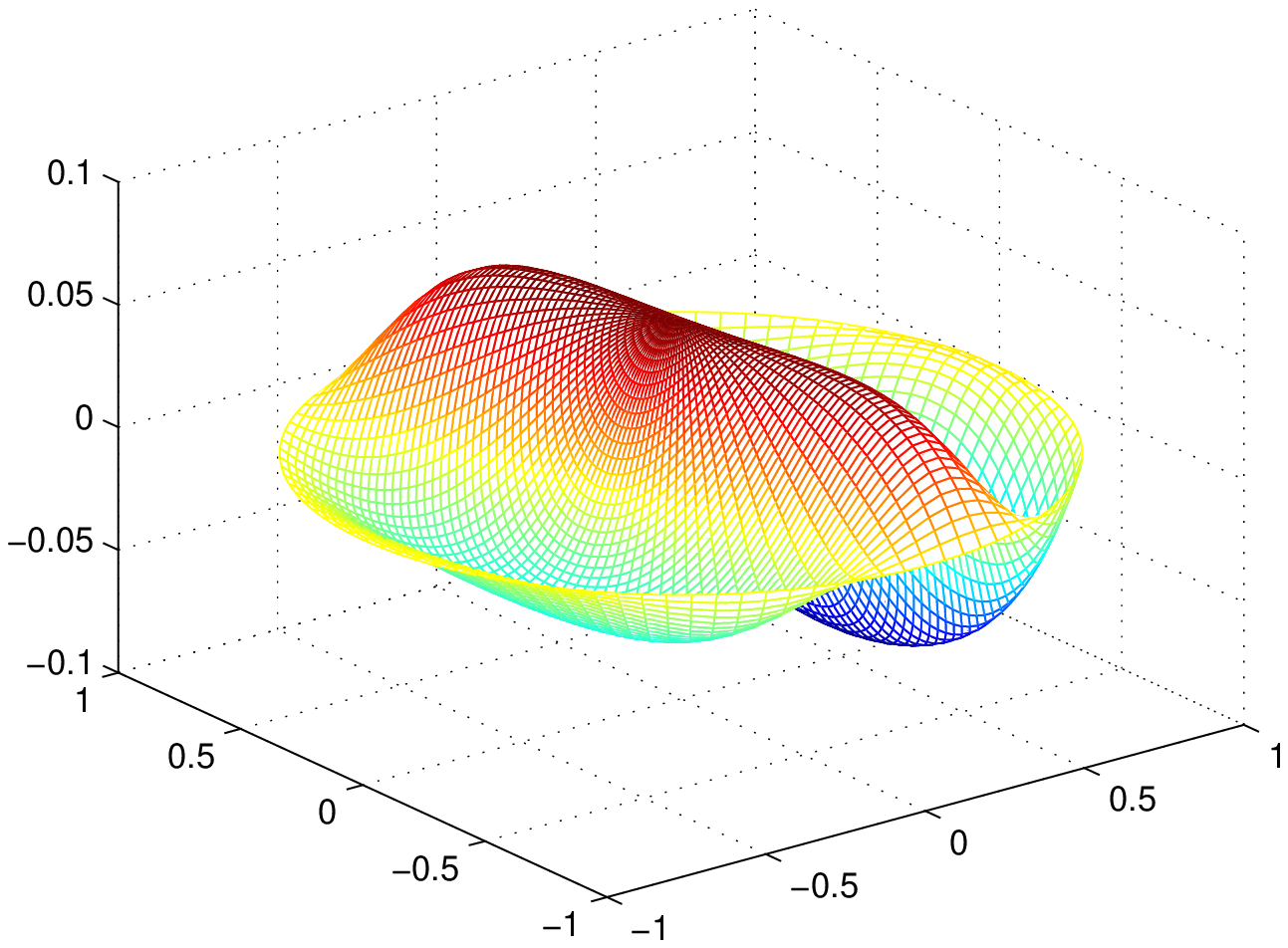}%
\\
$\nu_{1}=(0,0,1),$ $P_{1}$ is $xy-plane$%
\end{center}}}
&
{\parbox[b]{2.0003in}{\begin{center}
\includegraphics[
height=2.0003in,
width=2.0003in
]%
{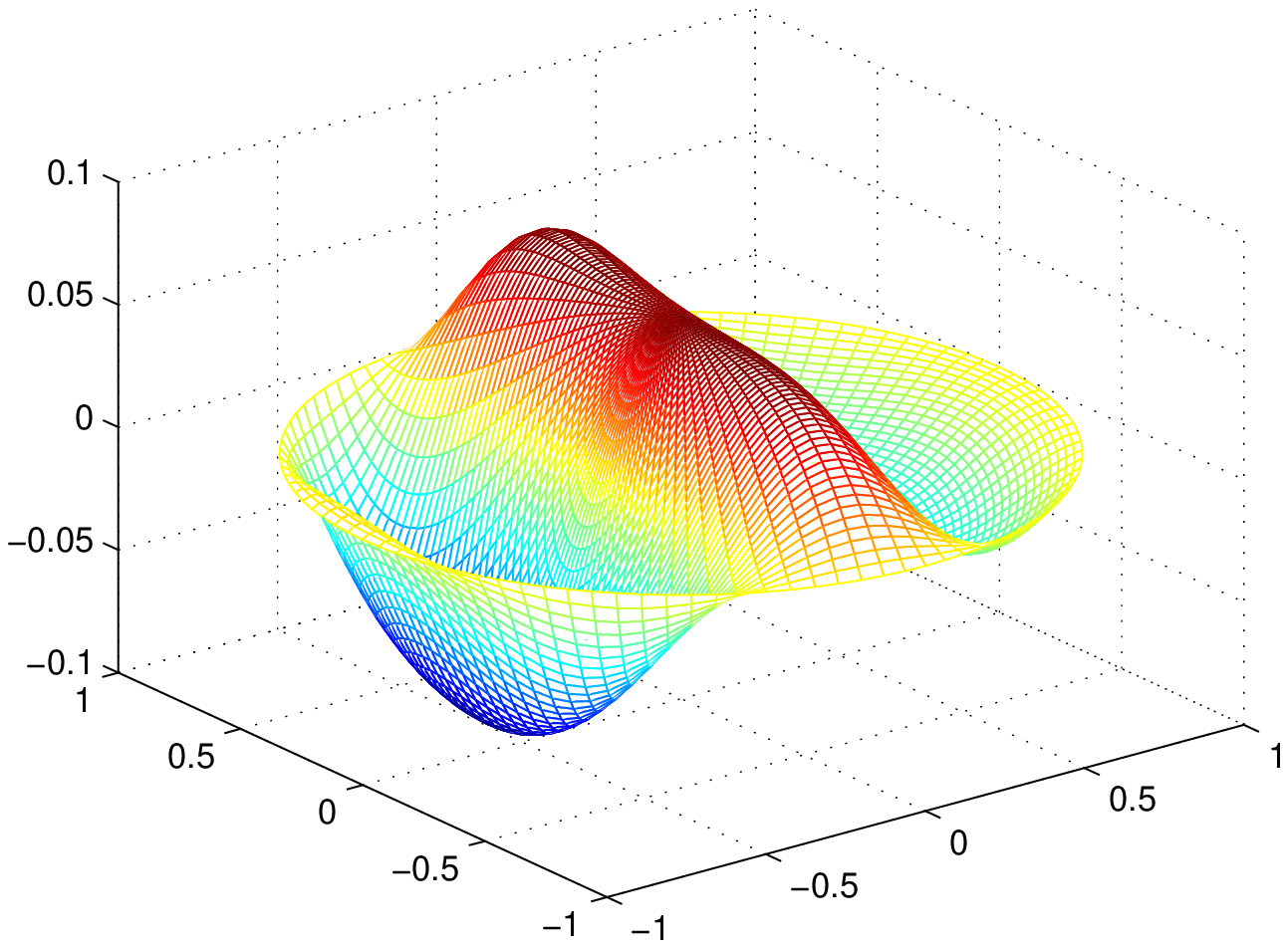}%
\\
$\nu_{2}=(0,1,0),$ $P_{2}$ is $xz-plane $%
\end{center}}}
\\%
{\parbox[b]{2.0003in}{\begin{center}
\includegraphics[
height=2.0003in,
width=2.0003in
]%
{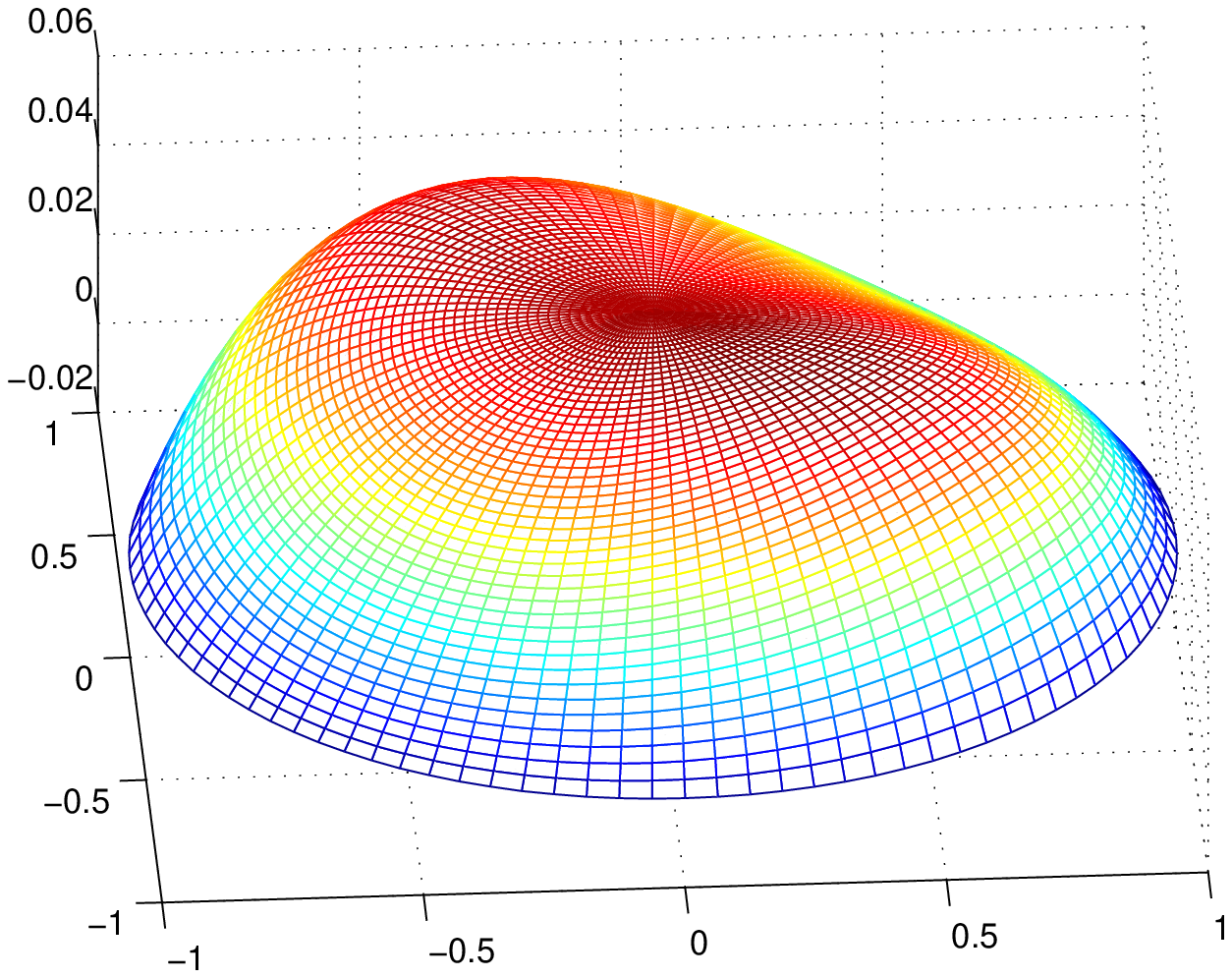}%
\\
$\nu_{3}=(1,0,0),$ $P_{3}$ is $yz-plane$%
\end{center}}}
&
{\parbox[b]{2.0003in}{\begin{center}
\includegraphics[
height=2.0003in,
width=2.0003in
]%
{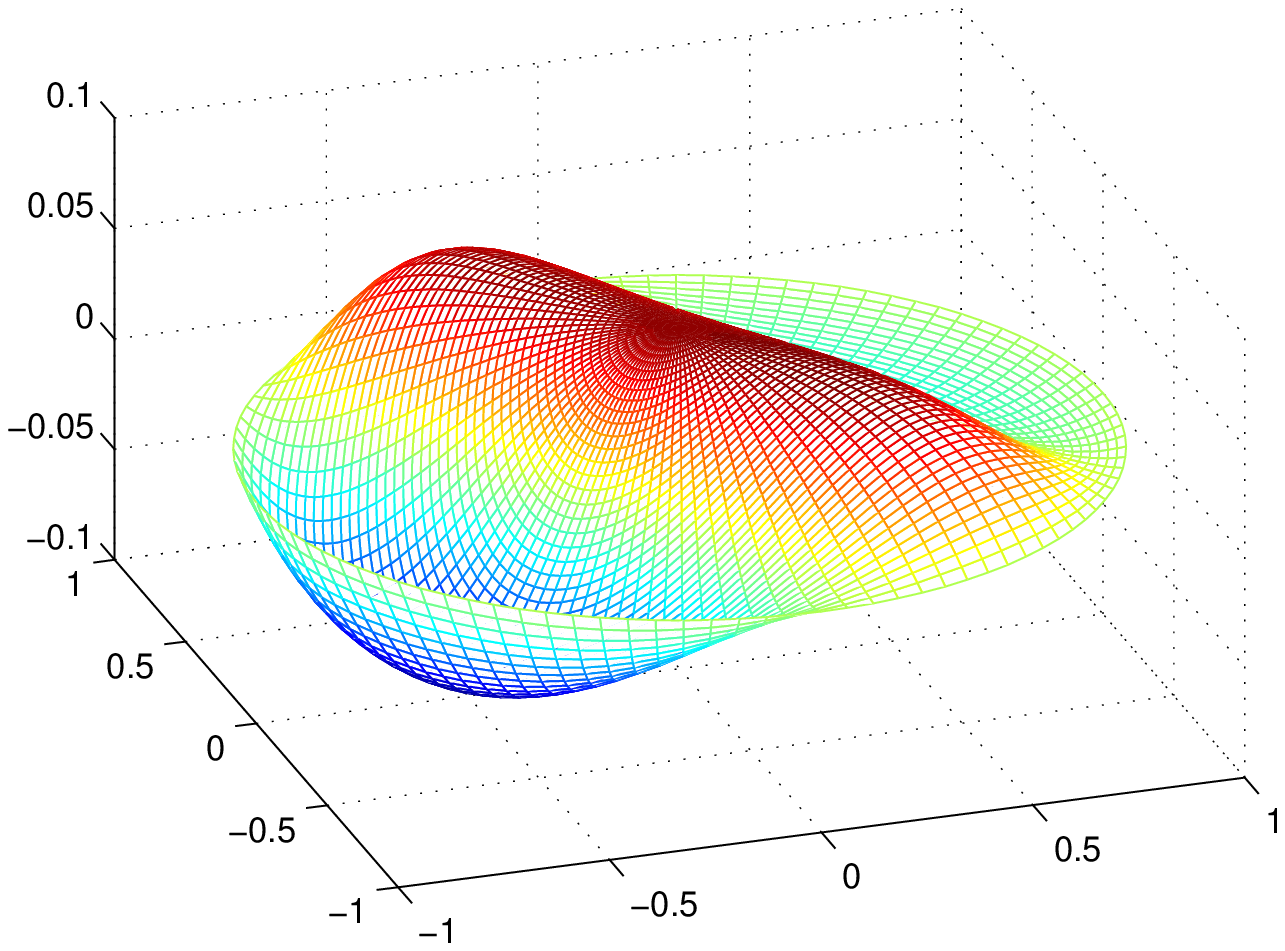}%
\\
$\nu_{4}=(1,1,1),$ $P_{4}$ is diagonal
\end{center}}}
\end{tabular}
\caption{The solution $\widetilde{u}\left( x,y,z\right) $ over $P\cap
\mathbb{B}^{3}$ with $P$ a plane passing through the origin and orthogonal to $\nu$}\label{Ex3Dplots}%
\end{figure}%

\section{Neumann boundary value problem\label{Nmn}}

Consider the boundary value problem%
\begin{equation}
-\Delta u\left(  s\right)  +\gamma\left(  s\right)  u\left(  s\right)
=f\left(  s,u(s)\right)  ,\quad\quad s\in\Omega,\label{e150}%
\end{equation}%
\begin{equation}
\frac{\partial u\left(  s\right)  }{\partial n_{s}}=0,\quad\quad
s\in\mathbb{\partial}\Omega.\label{e151}%
\end{equation}
with $n_{s}$ the exterior unit normal to $\partial\Omega$ at the boundary
point $s$. Later we discuss an extension to a nonzero normal derivative over
$\partial\Omega$. A necessary condition for the unknown function $u^{\ast} $
to be a solution of (\ref{e150})-(\ref{e151}) is that it satisfy%
\begin{equation}
\int_{\Omega}f\left(  s,u^{\ast}\left(  s\right)  \right)  \,ds=\int_{\Omega
}\gamma\left(  s\right)  u^{\ast}\left(  s\right)  \,ds.\label{e151a}%
\end{equation}
With our assumption that (\ref{e150})-(\ref{e151}) has a locally unique
solution $u^{\ast}$, (\ref{e151a}) is satisfied.

Proceed in analogy with the earlier treatment of the Dirichlet problem. Use
integration by parts to show that for arbitrary functions $u\in H^{2}\left(
\Omega\right)  ,$ $v\in H^{1}\left(  \Omega\right)  $,%
\begin{equation}%
\begin{array}
[c]{r}%
{\displaystyle\int_{\Omega}}
v(s)\left[  -\Delta u(s)+\gamma(s)u\right]  \,ds=%
{\displaystyle\int_{\Omega}}
\left[  \triangledown u(s)\cdot\triangledown v(s)+\gamma(s)u(s)v(s)\right]
\,ds\quad\medskip\\
-%
{\displaystyle\int_{\partial\Omega}}
v\left(  s\right)  \dfrac{\partial u(s)}{\partial n_{s}}\,ds.
\end{array}
\label{e151b}%
\end{equation}
Introduce the bilinear functional%
\[
\mathcal{A}\left(  v_{1},v_{2}\right)  =\int_{\Omega}\left[  \triangledown
v_{1}(s)\cdot\triangledown v_{2}(s)+\gamma(s)v_{1}(s)v_{2}(s)\right]  \,ds.
\]
The variational form of the Neumann problem (\ref{e150})-(\ref{e151}) is as
follows: find $u\in H^{1}\left(  \Omega\right)  $ such that%
\begin{equation}
\mathcal{A}\left(  u,v\right)  =\left(  \mathcal{F}\left(  u\right)
,v\right)  ,\quad\quad\forall v\in H^{1}\left(  \Omega\right) \label{e152}%
\end{equation}
with, as before, the operator $\mathcal{F}$ defined by%
\[
\left(  \mathcal{F}\left(  u\right)  \right)  \left(  s\right)  =f(s,u\left(
s\right)  ).
\]
The theory for (\ref{e152}) is essentially the same as for the Dirichlet
problem in its reformulation (\ref{en3}).

Because of changes that take place in the normal derivative under the
transformation $s=\Phi\left(  x\right)  $, we modify the construction of the
numerical method. In the actual implementation, however, it will mirror that
for the Dirichlet problem. For the approximating space, let
\[
\mathcal{X}_{n}=\left\{  q\mid q\circ\Phi=p\text{ for some }p\in\Pi_{n}%
^{d}\right\}  .
\]
For the numerical method, we seek $u_{n}^{\ast}\in\mathcal{X}_{n}$ for which%
\begin{equation}
\mathcal{A}\left(  u_{n}^{\ast},v\right)  =\left(  \mathcal{F}\left(
u_{n}^{\ast}\right)  ,v\right)  ,\quad\quad\forall v\in\mathcal{X}%
_{n}.\label{e153}%
\end{equation}
A similar approach was used in \cite{ahc2009} for the linear Neumann problem.

To carry out a convergence analysis for (\ref{e153}), it is necessary to
compare convergence of approximants in $\mathcal{X}_{n}$ to that of
approximants from $\Pi_{n}^{d}.$ For simplicity in notation, we assume
$\Phi\in C^{\infty}\left(  \overline{\mathbb{B}}^{d}\right)  $. Begin by
referring to Lemma \ref{transform_lemma} and its discussion in
\S \ref{section_transform}, linking differentiability in $H^{m}\left(
\Omega\right)  $ and $H^{m}\left(  \mathbb{B}^{d}\right)  $. In particular,
for $m\geq0$,
\begin{equation}
c_{1,m}\left\Vert v\right\Vert _{H^{m}\left(  \Omega\right)  }\leq\left\Vert
\widetilde{v}\right\Vert _{H^{m}\left(  \mathbb{B}^{d}\right)  }\leq
c_{2,m}\left\Vert v\right\Vert _{H^{m}\left(  \Omega\right)  },\quad\quad v\in
H^{m}\left(  \Omega\right)  ,\label{e154}%
\end{equation}
with $\widetilde{v}=v\circ\Phi$, with constants $c_{1,m},c_{2,m}>0$.

Also recall Theorem \ref{Thm1a} concerning approximation of functions
$\widetilde{v}\in H^{r}\left(  \mathbb{B}^{d}\right)  $ and link this to
approximation of functions $v\in H^{r}\left(  \Omega\right)  $.

\begin{lemma}
Assume $\Phi\in C^{\infty}\left(  \overline{\mathbb{B}}^{d}\right)  $. Assume
$v\in H^{r}\left(  \Omega\right)  $ for some $r\geq2$. Then there exist a
sequence $q_{n}\in\mathcal{X}_{n}$, $n\geq1$, for which
\begin{equation}
\left\Vert v-q_{n}\right\Vert _{H^{1}\left(  \Omega\right)  }\leq
\varepsilon_{n,r}\left\Vert v\right\Vert _{H^{r}\left(  \Omega\right)  }%
,\quad\quad n\geq1.\label{e155}%
\end{equation}
\textit{The sequence }$\varepsilon_{n,r}=\mathcal{O}\left(  n^{-r+1}\right)
$\textit{\ and is independent of }$v$\textit{.}
\end{lemma}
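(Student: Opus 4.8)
The plan is to transport the approximation problem from $\Omega$ to $\mathbb{B}^{d}$, apply the Li--Xu approximation result (Theorem \ref{Thm1a}) there, and then pull the resulting polynomial back to $\Omega$, controlling all norms via the equivalence \eqref{e154}. First I would set $\widetilde{v}=v\circ\Phi$. Since $\Phi\in C^{\infty}(\overline{\mathbb{B}}^{d})$ and $v\in H^{r}(\Omega)$, Lemma \ref{transform_lemma} guarantees $\widetilde{v}\in H^{r}(\mathbb{B}^{d})$, and the upper half of \eqref{e154} gives $\|\widetilde{v}\|_{H^{r}(\mathbb{B}^{d})}\leq c_{2,r}\|v\|_{H^{r}(\Omega)}$. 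This is the step that converts the hypothesis on $\Omega$ into a usable hypothesis on the ball.

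Next I would invoke Theorem \ref{Thm1a}: because $r\geq2$ and $\widetilde{v}\in H^{r}(\mathbb{B}^{d})$, there is a sequence $p_{n}\in\Pi_{n}^{d}$ with
\[
\|\widetilde{v}-p_{n}\|_{H^{1}(\mathbb{B}^{d})}\leq\varepsilon_{n,r}\|\widetilde{v}\|_{H^{r}(\mathbb{B}^{d})},
\]
where $\varepsilon_{n,r}=\mathcal{O}(n^{-r+1})$ is independent of $\widetilde{v}$. Now I define the candidate approximant on $\Omega$ by $q_{n}:=p_{n}\circ\Psi=p_{n}\circ\Phi^{-1}$, so that $q_{n}\circ\Phi=p_{n}\in\Pi_{n}^{d}$; by the definition of $\mathcal{X}_{n}$ in the Neumann section, this places $q_{n}\in\mathcal{X}_{n}$, as required. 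Crucially, $\widetilde{v-q_{n}}=(v-q_{n})\circ\Phi=\widetilde{v}-p_{n}$, so the transformed error is exactly the quantity already bounded.

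Finally I would apply the lower half of \eqref{e154} in the case $m=1$ to convert back:
\[
\|v-q_{n}\|_{H^{1}(\Omega)}\leq\frac{1}{c_{1,1}}\|\widetilde{v}-p_{n}\|_{H^{1}(\mathbb{B}^{d})}\leq\frac{c_{2,r}}{c_{1,1}}\,\varepsilon_{n,r}\|v\|_{H^{r}(\Omega)}.
\]
Absorbing the fixed constant $c_{2,r}/c_{1,1}$ into a relabeled $\varepsilon_{n,r}$ preserves both the rate $\mathcal{O}(n^{-r+1})$ and the independence from $v$, which yields \eqref{e155}. The argument is essentially a routine change-of-variables sandwich; the only point requiring care — and the main thing to verify rather than an obstacle per se — is that the pullback $p_{n}\circ\Psi$ genuinely lands in the space $\mathcal{X}_{n}$ as defined, and that the norm-equivalence constants in \eqref{e154} are available for the specific indices $m=1$ and $m=r$ used here. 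Both follow directly from the smoothness of $\Phi$ (equivalently $\Psi$) together with \eqref{e8}, so no genuinely new estimate is needed beyond the cited results.
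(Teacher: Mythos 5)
Your proposal is correct and follows essentially the same route as the paper's own proof: apply Theorem \ref{Thm1a} to $\widetilde{v}=v\circ\Phi$, set $q_{n}=p_{n}\circ\Phi^{-1}$, and convert norms back and forth via (\ref{e154}). You simply spell out the norm-equivalence steps and the membership $q_{n}\in\mathcal{X}_{n}$ that the paper leaves implicit.
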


\begin{proof}
Begin by applying Theorem \ref{Thm1a} to the function $\widetilde{v}\left(
x\right)  =v\left(  \Phi\left(  x\right)  \right)  $. Then there is a sequence
of polynomials $p_{n}\in\Pi_{n}^{d}$ for which%
\[
\left\Vert \widetilde{v}-p_{n}\right\Vert _{H^{1}\left(  \mathbb{B}%
^{d}\right)  }\leq\varepsilon_{n,r}\left\Vert \widetilde{v}\right\Vert
_{H^{r}\left(  \mathbb{B}^{d}\right)  },\quad\quad n\geq1.
\]
Let $q_{n}=p_{n}\circ\Phi^{-1}$. The result then follows by applying
(\ref{e154}). $\left.  {}\right.  $\hfill\medskip
\end{proof}

The theoretical convergence analysis now follows exactly that given earlier
for the Dirichlet problem. Again we use the construction from \cite[\S 4(a)]%
{Osborn1975}, but now use the integral operator $\mathcal{T}$ arising from the
zero Neumann boundary condition. As with the Dirichlet problem, it is
necessary to have $\mathcal{A}$ be strongly elliptic, and for that reason and
without any loss of generality, assume
\[
\min_{s\in\overline{\Omega}}\gamma\left(  s\right)  >0.
\]
The solution of (\ref{e152}) can be written as $u=\mathcal{TF}\left(
u\right)  $ with $\mathcal{T}:L_{2}\left(  \mathbb{B}^{d}\right)  \rightarrow
H^{2}\left(  \mathbb{B}^{d}\right)  $ and bounded. Use Theorem \ref{Thm1a} in
place of Theorem \ref{Thm1b} for polynomial approximation error, as in the
derivation of (\ref{en11a}). \ Theorems \ref{thm2} and \ref{ThmDir}, along
with Corollary \ref{CorDir} are valid for the spectral method for the Neumann
problem (\ref{e150})-(\ref{e151}).

\subsection{Implementation}

As in \S \ref{implement}, we look for a solution to (\ref{e152}) by looking
for
\begin{equation}
u_{n}\left(  s\right)  =\sum_{\ell=1}^{N_{n}}\alpha_{\ell}\psi_{\ell}\left(
s\right) \label{e156}%
\end{equation}
with \ $\left\{  \psi_{\ell}:1\leq j\leq N_{n}\right\}  $ a basis for
$\mathcal{X}_{n}$. The system associated with (\ref{e152}) that is to be
solved is%
\begin{equation}%
\begin{array}
[c]{l}%
{\displaystyle\sum\limits_{\ell=1}^{N_{n}}}
\alpha_{\ell}%
{\displaystyle\int_{\Omega}}
\left[
{\displaystyle\sum\limits_{i,j=1}^{d}}
a_{i,j}(s)\dfrac{\partial\psi_{\ell}(s)}{\partial s_{i}}\dfrac{\partial
\psi_{k}(s)}{\partial s_{j}}+\gamma\left(  s\right)  \psi_{\ell}\left(
s\right)  \psi_{k}\left(  s\right)  \right]  \,ds\quad\smallskip\\
\quad\quad=%
{\displaystyle\int_{\Omega}}
f\left(  s,%
{\displaystyle\sum\limits_{\ell=1}^{N_{n}}}
\alpha_{\ell}\psi_{\ell}\left(  s\right)  \right)  \psi_{k}(s)\,ds,\quad\quad
k=1,\dots,N_{n}.
\end{array}
\label{e157}%
\end{equation}

For such a basis $\left\{  \psi_{\ell}\right\}  $, we begin with an
orthonormal basis for $\Pi_{n}$, say $\{ \varphi_{j}:1\leq j\leq N_{n}\}$, and
then define%
\[
\psi_{\ell}\left(  s\right)  =\varphi_{\ell}\left(  x\right)  \quad
\text{\emph{with\quad}}s=\Phi\left(  x\right)  ,\quad\quad1\leq\ell\leq N.
\]
The function $\widetilde{u}_{n}\left(  x\right)  \equiv u_{n}\left(
\Phi\left(  x\right)  \right)  $, $x\in\mathbb{B}^{d},$ is to be the
equivalent solution considered over $\mathbb{B}^{d}.$ Using the transformation
of variables $s=\Phi\left(  x\right)  $ in the system (\ref{e157}), the
coefficients $\left\{  \alpha_{\ell}|\ell=1,2,\dots,N_{n}\right\}  $ are the
solutions of
\begin{equation}%
\begin{array}
[c]{r}%
{\displaystyle\sum\limits_{k=1}^{N_{n}}}
\alpha_{k}%
{\displaystyle\int_{\mathbb{B}^{d}}}
\left[
{\displaystyle\sum\limits_{i,j=1}^{d}}
\widetilde{a}_{i,j}(x)\dfrac{\partial\varphi_{k}(x)}{\partial x_{j}}%
\dfrac{\partial\varphi_{\ell}(x)}{\partial x_{i}}+\gamma(\Phi\left(  x\right)
)\varphi_{k}(x)\varphi_{\ell}(x)\right]  \det J\left(  x\right)
\,dx\medskip\\
=%
{\displaystyle\int_{\mathbb{B}^{d}}}
f\left(  x,%
{\displaystyle\sum\limits_{k=1}^{N_{n}}}
\alpha_{k}\varphi_{k}\left(  x\right)  \right)  \varphi_{\ell}\left(
x\right)  \det J\left(  x\right)  \,dx,\quad\quad\ell=1,\dots,N_{n}%
\end{array}
\label{e158}%
\end{equation}
For the equation (\ref{e150}) the matrix $A\left(  s\right)  $ is the
identity, and therefore from (\ref{en8ac}),
\[
\widetilde{A}\left(  x\right)  =J\left(  x\right)  ^{-1}J\left(  x\right)
^{-\text{T}}.
\]
The system (\ref{e158}) is much the same as (\ref{e78}) for the Dirichlet
problem, differing only by the basis functions being used for the solution
$\widetilde{u}_{n}$. We use the same numerical integration as before, and also
the same orthonormal basis for $\Pi_{n}^{d}$.

\subsection{Numerical example}

Consider the problem%
\begin{equation}%
\begin{tabular}
[c]{cc}%
$-\Delta u\left(  s,t\right)  +u\left(  s,t\right)  =f\left(  s,t,u\left(
s,t\right)  \right)  ,$ & $\quad\left(  s,t\right)  \in\Omega$\\
$\dfrac{\partial u\left(  s\right)  }{\partial n_{s}}=0,$ & $\quad\left(
s,t\right)  \in\partial\Omega$%
\end{tabular}
\label{e160}%
\end{equation}
with $\Omega$ the elliptical region%
\[
\left(  \frac{s}{a}\right)  ^{2}+\left(  \frac{t}{b}\right)  ^{2}\leq1.
\]
The mapping of $\mathbb{B}^{2}$ onto $\Omega$ is simply%
\[
\Phi\left(  x,y\right)  =\left(  ax,by\right)  ,\quad\quad\left(  x,y\right)
\in\overline{\mathbb{B}}^{2}.
\]
As before, note the change in notation, from $s\in\mathbb{R}^{2}$ to $\left(
s,t\right)  \in\mathbb{R}^{2}$.

The right side $f$ is given by
\begin{equation}
f\left(  s,t,u\right)  =-e^{u}+f_{1}\left(  s,t\right) \label{e161}%
\end{equation}
with the function $f_{1}$ determined from the given true solution and the
equation (\ref{e160}) to define $f\left(  s,t,u\right)  $. In our case,%
\begin{equation}
u\left(  s,t\right)  =\left(  1-\left(  \frac{s}{a}\right)  ^{2}-\left(
\frac{t}{b}\right)  ^{2}\right)  ^{2}\cos\left(  2s+t^{2}\right)
.\label{e162}%
\end{equation}
Easily this has a normal derivative of zero over the boundary of $\Omega$.%

\begin{figure}[tbp] \centering
\begin{tabular}
[c]{ll}%
{\parbox[b]{2.0003in}{\begin{center}
\includegraphics[
height=2.0003in,
width=2.0003in
]%
{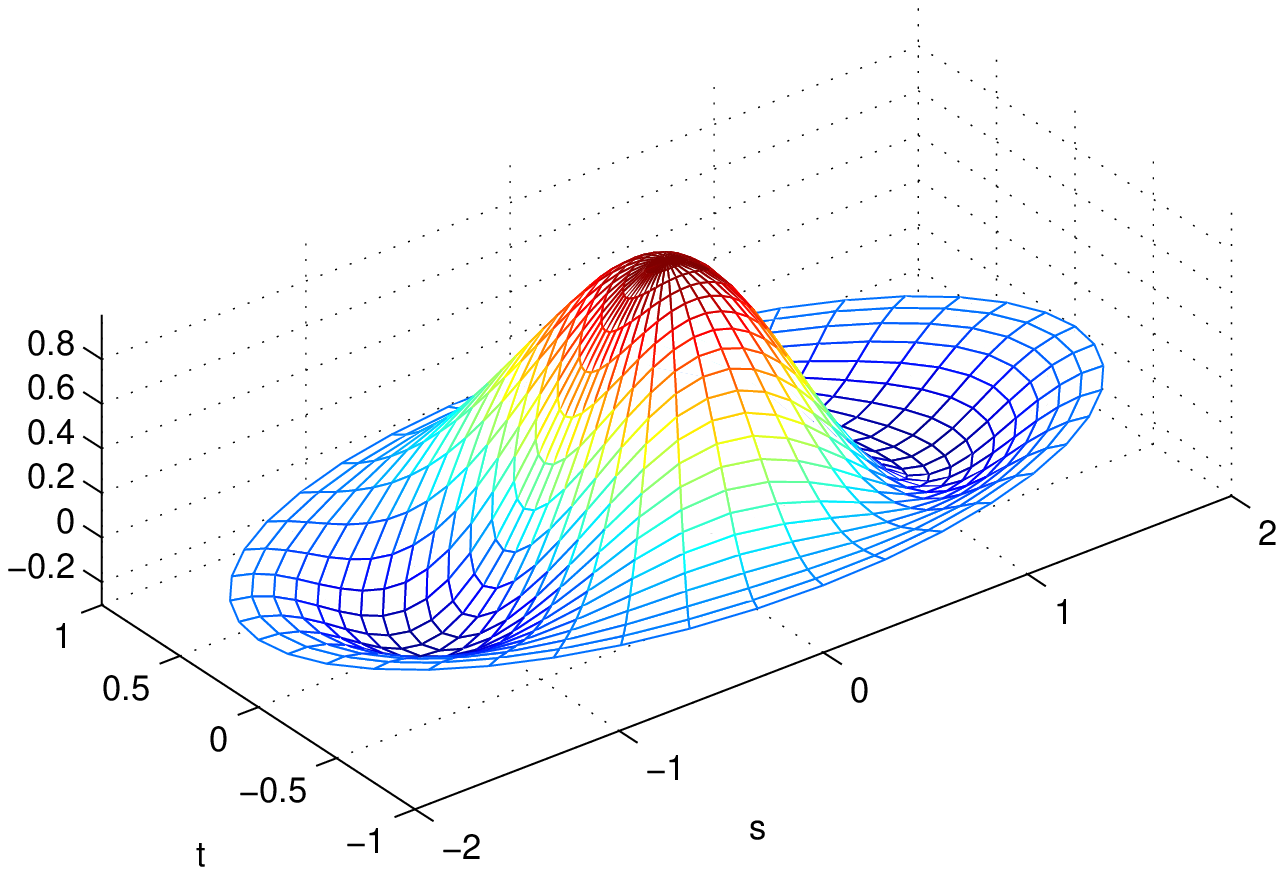}%
\\
Solution (\ref{e162})
\end{center}}}
&
\raisebox{-0.0208in}{\parbox[b]{2.0003in}{\begin{center}
\includegraphics[
height=2.0003in,
width=2.0003in
]%
{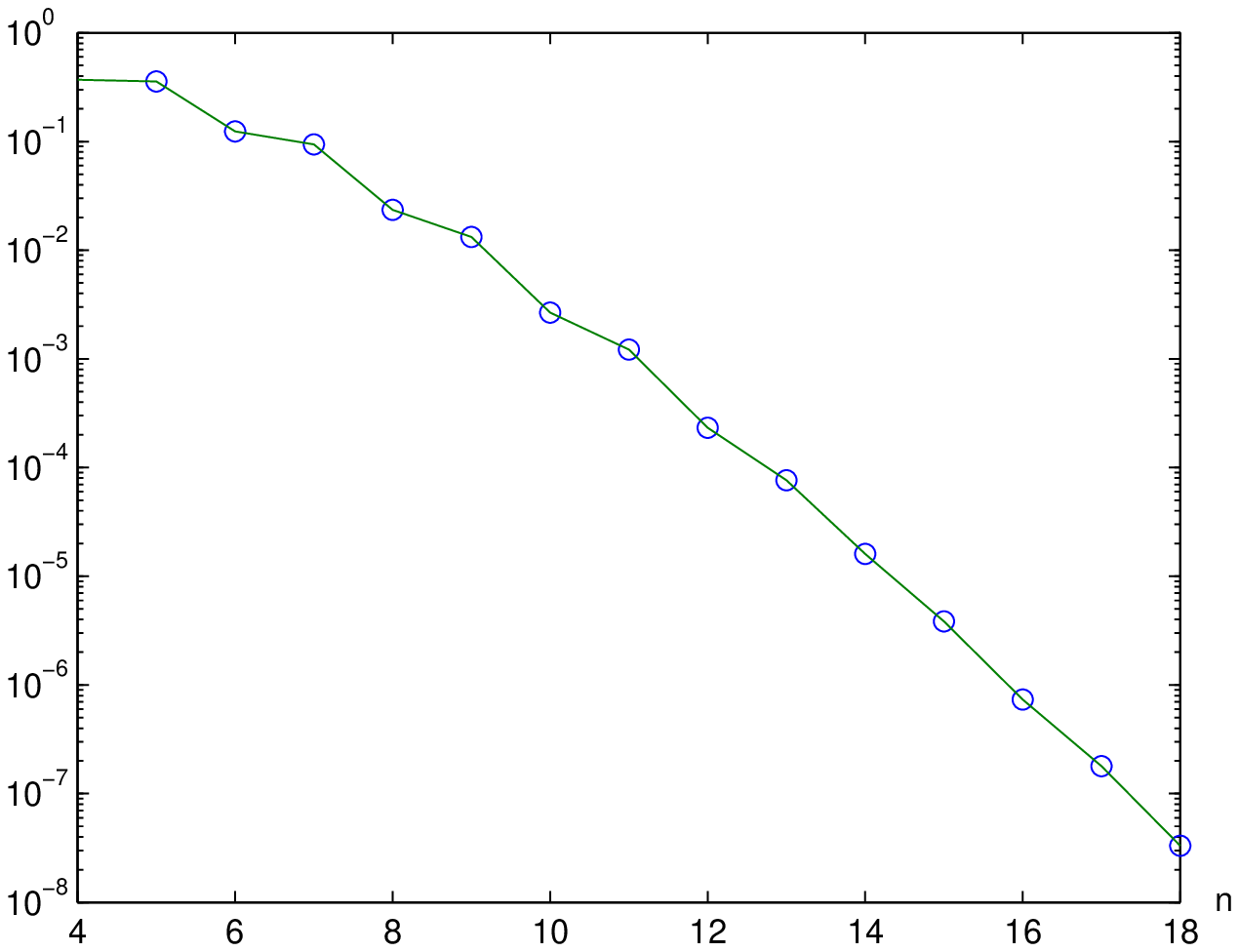}%
\\
Maximum error
\end{center}}}
\end{tabular}
\caption{The solution u to (\ref{e160}) with right side (\ref{e161}) and true
solution (\ref{e162})}\label{nmnsoln}%
\end{figure}%

The nonlinear system (\ref{e158}) was solved using \texttt{fsolve} from
\textsc{Matlab,} as earlier in \S \ref{NumExam}. Our region $\Omega$ uses
$\left(  a,b\right)  =\left(  2,1\right)  $. Figure \ref{nmnsoln} contains the
approximate solution for $n=18$ and also shows the maximum error over
$\overline{\Omega}$. Again, the convergence appears to be exponential.

\subsection{Handling a nonzero Neumann condition}

Consider the problem
\begin{equation}
-\Delta u\left(  s\right)  +\gamma\left(  s\right)  u\left(  s\right)
=f\left(  s,u(s)\right)  ,\quad\quad s\in\Omega,\label{e170}%
\end{equation}%
\begin{equation}
\frac{\partial u\left(  s\right)  }{\partial n_{s}}=g(s),\quad\quad
s\in\mathbb{\partial}\Omega\label{e171}%
\end{equation}
with a nonzero Neumann boundary condition. Let $u^{\ast}\left(  s\right)  $
denote the solution we are seeking. A necessary condition for solvability of
(\ref{e170})-(\ref{e171}) is that
\begin{equation}
\int_{\Omega}f\left(  s,u^{\ast}\left(  s\right)  \right)  \,ds=\int_{\Omega
}\gamma\left(  s\right)  u^{\ast}\left(  s\right)  \,ds-\int_{\partial\Omega
}g\left(  s\right)  \,ds.\label{e171a}%
\end{equation}
There are at least two approaches to extending our spectral method to solve
this problem.

First, consider the problem%
\begin{equation}
-\Delta v\left(  s\right)  =c_{0},\quad\quad s\in\Omega,\label{e172}%
\end{equation}%
\begin{equation}
\frac{\partial v\left(  s\right)  }{\partial n_{s}}=g(s),\quad\quad
s\in\mathbb{\partial}\Omega,\label{e173}%
\end{equation}
with $c_{0}$ a constant. From (\ref{e171a}), solvability of (\ref{e172}%
)-(\ref{e173}) requires
\begin{equation}
\int_{\Omega}c_{0}\,ds=-\int_{\partial\Omega}g\left(  s\right)
\,ds,\label{e173a}%
\end{equation}
is satisfied. To do so, choose%
\[
c_{0}=\frac{-1}{\operatorname*{Vol}\left(  \Omega\right)  }\int_{\partial
\Omega}g\left(  s\right)  \,ds.
\]
A solution $v^{\ast}\left(  s\right)  $ exists, although it is not unique. The
solution of (\ref{e172})-(\ref{e173}) can be approximated using the method
given in \cite{ahc2009}. Then introduce%
\[
w=u-v^{\ast}.
\]
Substituting into (\ref{e170})-(\ref{e171}), the new unknown function
$w^{\ast}$ satisfies%
\begin{equation}
-\Delta w\left(  s\right)  +\gamma\left(  s\right)  w\left(  s\right)
=f\left(  s,w(s)+v^{\ast}\left(  s\right)  \right)  -\gamma\left(  s\right)
v^{\ast}\left(  s\right)  -c_{0},\quad\quad s\in\Omega,\label{e174}%
\end{equation}%
\begin{equation}
\frac{\partial w\left(  s\right)  }{\partial n_{s}}=0,\quad\quad
s\in\mathbb{\partial}\Omega.\label{e175}%
\end{equation}
The methods of this section can be used to approximate $w^{\ast}$; and then
use $u^{\ast}=w^{\ast}+v^{\ast}.$

A second approach is to use (\ref{e151b}) to reformulate (\ref{e170}%
)-(\ref{e171}) as the problem of finding $u=u^{\ast}$ for which%
\begin{equation}
\mathcal{A}\left(  u,v\right)  =\left(  \mathcal{F}\left(  u\right)
,v\right)  +\ell\left(  v\right)  ,\quad\quad\forall v\in H^{1}\left(
\Omega\right) \label{e176}%
\end{equation}
with
\[
\ell\left(  v\right)  =\int_{\partial\Omega}v\left(  s\right)  g\left(
s\right)  \,ds.
\]
Thus we seek
\[
u_{n}\left(  s\right)  =\sum_{\ell=1}^{N_{n}}\alpha_{\ell}\psi_{\ell}\left(
s\right)
\]
for which%
\begin{equation}
\mathcal{A}\left(  u_{n},v\right)  =\left(  \mathcal{F}\left(  u\right)
,v\right)  +\ell\left(  v\right)  ,\quad\quad\forall v\in\mathcal{X}%
_{n}.\label{e177}%
\end{equation}

The first approach, that of (\ref{e170})-(\ref{e175}), is usable, and the
convergence analysis follows from combining this paper's analysis with that of
\cite{ahc2009}. Unfortunately, we do not have a convergence analysis \ for
this second approach, that of (\ref{e176})-(\ref{e177}), as the Green's
function approach of this paper does not seem to extend to it.


\begin{thebibliography}{99}                                                                                               %
\bibitem {atkin1973}K. Atkinson. The numerical evaluation of fixed points for
completely continuous operators, \emph{SIAM J. Num. Anal.} \textbf{10} (1973),
pp. 799-807.

\bibitem {ach2008}K. Atkinson, D. Chien, and O. Hansen. A spectral method for
elliptic equations: The Dirichlet problem, \emph{Advances in Computational
Mathematics}\textit{, }\textbf{33} (2010), pp. 169-189.

\bibitem {ach2013}K. Atkinson, D. Chien, and O. Hansen. Evaluating polynomials
over the unit disk and the unit ball, \emph{Numerical Algorithms} \textbf{67}
(2014)\textit{,} pp. 691-711.

\bibitem {ah2010}K. Atkinson and O. Hansen. A spectral method for the
eigenvalue problem for elliptic equations, \emph{Electronic Transactions on
Numerical Analysis}\textit{\ }\textbf{37} (2010), pp. 386-412.

\bibitem {ah2011}K. Atkinson and O. Hansen. Creating domain mappings,
\emph{\ Electronic Transactions on Numerical Analysis} \textbf{39} (2012), pp. 202-230.

\bibitem {ahc2009}K. Atkinson, O. Hansen, and D. Chien. A spectral method for
elliptic equations: The Neumann problem, \emph{Advances in Computational
Mathematics}\textit{\ }\textbf{34} (2011), pp. 295-317.

\bibitem {ahc2013}K. Atkinson,O. Hansen, and D. Chien. \ A spectral method for
parabolic differential equations, \textit{Numerical Algorithms} \textbf{63}
(2013), pp. 213-237.

\bibitem {atkinson-han}K. Atkinson and W. Han. \emph{Theoretical Numerical
Analysis: A Functional Analysis Framework}, 3$^{\text{rd}}$ ed.,
Springer-Verlag, 2009.

\bibitem {AtkHan}K. Atkinson and W. Han. \emph{An Introduction to Spherical
Harmonics and Approximations on the Unit Sphere}, Springer-Verlag, 2012

\bibitem {DaiXu2011}F. Dai and Y. Xu. \emph{Approximation Theory and Harmonic
Analysis on Spheres and Balls}, Springer-Verlag, 2013.

\bibitem {DX}C. Dunkl and Y. Xu. \emph{Orthogonal Polynomials of Several
Variables}, Cambridge Univ. Press, 2001.

\bibitem {gautschi}W. Gautschi. \emph{Orthogonal Polynomials}, Oxford
University Press, 2004.

\bibitem {Kot}Mark Kot. \emph{Elements of Mathematical Ecology}, Cambridge
University Press, 2001.

\bibitem {LiXu}Huiyuan Li and Yuan Xu. Spectral approximation on the the unit
ball, \emph{SIAM J. Num. Anal.} \textbf{52} (2014), pp. 2647-2675.

\bibitem {Loga}B. Logan and L. Shepp. Optimal reconstruction of a function
from its projections, \emph{Duke Mathematical Journal} \textbf{42}, (1975), 645--659.

\bibitem {Kras1964}M. Krasnose\v{l}skii. \emph{Topological Methods in the
Theory of Nonlinear Integral Equations}, Pergamon Press, 1964.

\bibitem {macroberts}T. M. MacRobert, \textit{Spherical Harmonics}, Dover
Publications, Inc., 1948.

\bibitem {olbc}F. Olver, D. Lozier, R. Boisvert, and C. Clark (Editors).
\emph{NIST Handbook of Mathematical Functions}, Cambridge University Press, 2010.

\bibitem {Osborn1975}John Osborn. Spectral approximation for compact
operators, \emph{Mathematics of Computation} \textbf{29} (1975), pp. 712-725.

\bibitem {stroud}A. Stroud. \emph{Approximate Calculation of Multiple
Integrals}\textit{,} Prentice-Hall, Inc., 1971.

\bibitem {xu2004}Yuan Xu. Lecture notes on orthogonal polynomials of several
variables, in \emph{Advances in the Theory of Special Functions and Orthogonal
Polynomials}, Nova Science Publishers, 2004, 135-188.

\bibitem {zeidler}E. Zeidler. \emph{Nonlinear Functional Analysis and Its
Applications: II/B}, Springer-Verlag, 1990.
\end{thebibliography}
\end{document}